\definecolor{vio}{rgb}{0.54, 0.17, 0.89}
\renewcommand{\thefootnote}{\alph{footnote}}
\newtheorem{theorem}{Theorem}[section]
\newtheorem{lemma}[theorem]{Lemma}
\newtheorem{corollary}[theorem]{Corollary}
\numberwithin{equation}{section}
\theoremstyle{remark}
\newtheorem*{remark}{Remark}
\DeclareMathOperator{\R}{\mathbb{R}}
\DeclareMathOperator{\N}{\mathbb{N}}
\def\reals{\hbox{\rm I\kern-.18em R}}
\def\complexes{\hbox{\rm C\kern-.43em
\vrule depth 0ex height 1.4ex width .05em\kern.41em}}
\def\field{\hbox{\rm I\kern-.18em F}} 
\let\svthefootnote\thefootnote
\newcommand\freefootnote[1]{%
  \let\thefootnote\relax%
  \footnotetext{#1}%
  \let\thefootnote\svthefootnote%
}
\begin{document}

\title[The error term in the explicit formula of Riemann--von Mangoldt]{On the error term in the explicit formula of Riemann--von Mangoldt II}

\author{Michaela Cully-Hugill and Daniel R. Johnston}
\address{School of Science, UNSW Canberra, Australia}
\email{m.cully-hugill@unsw.edu.au}
\address{School of Science, UNSW Canberra, Australia}
\email{daniel.johnston@unsw.edu.au}
\date\today
\keywords{}

\begin{abstract}
    We give an explicit $O(x/T)$ error term for the truncated Riemann--von Mangoldt explicit formula. For large $x$, this provides a modest improvement over previous work, which we demonstrate via an application to a result on primes between consecutive powers. 
\end{abstract}

\maketitle
\freefootnote{\textit{Affiliation}: School of Science, The University of New South Wales Canberra, Australia}
\freefootnote{\textit{Corresponding author}: Daniel Johnston (daniel.johnston@unsw.edu.au)}
\freefootnote{\textit{Other author}: Michaela Cully-Hugill (m.cully-hugill@unsw.edu.au)}
\freefootnote{\textit{Key phrases}: Riemann-von Mangoldt formula, zero-free regions, primes between powers, explicit estimates, Riemann zeta-function}
\freefootnote{\textit{MSC classes}: 11M26 (Primary) 11N05 (Secondary)}

\section{Introduction}

This paper gives a new explicit estimate for the error in the truncated Riemann--von Mangoldt explicit formula. This is a sequel to \cite{CH_DJ_Perron1}, which gave a related result. In contrast to \cite{CH_DJ_Perron1}, this new estimate is asymptotically sharper, but allows for less control over the point of truncation.

Let $\psi(x)$ denote the Chebyshev prime-counting function with
\begin{equation*}
    \psi(x)=\sum_{n\leq x}\Lambda(n)=\sum_{p^k\leq x}\log p,
\end{equation*}
where $p^k$ are prime powers for any integer $k\geq 1$, and $\Lambda(n)$ is the von Mangoldt function. The truncated Riemann--von Mangoldt formula can be written as
\begin{equation}\label{riemannvoneq}
    \psi(x)=x-\sum_{\substack{\rho=\beta+i\gamma\\|\gamma|\leq T}}\frac{x^\rho}{\rho}+E(x,T),
\end{equation}
where the sum is over all non-trivial zeros $\rho=\beta+i\gamma$ of the Riemann zeta-function $\zeta(s)$ that have $|\gamma|\leq T$, and $E(x,T)$ is an error term. 

In \cite[Thm~1.2]{CH_DJ_Perron1}, the authors prove the following.
\begin{theorem}\label{oldRvM}
    For any $\alpha\in(0,1/2]$ there exist constants $M$ and $x_M$ such that for $\max\{51,\log x\}<T<(x^{\alpha}-2)/2$,
    \begin{equation}\label{oldRvMpsi}
        \psi(x)=x-\sum_{\substack{|\gamma|\leq T}}\frac{x^\rho}{\rho}+O^*\left(M\frac{x\log x}{T}\right)
    \end{equation}
    for all $x\geq x_M$. Some admissible values of $x_M$, $\alpha$ and $M$ are $(40,1/2,5.03)$ and $(10^3, 1/100, 0.5597)$, with more given in \cite[Table 4]{CH_DJ_Perron1}.
\end{theorem}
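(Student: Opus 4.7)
My plan is to follow the classical contour-integration derivation of the explicit formula, making every step quantitative. I would begin from a truncated Perron-type identity for $\psi(x)$, namely
\begin{equation*}
    \psi(x) = -\frac{1}{2\pi i}\int_{c-iT}^{c+iT}\frac{\zeta'(s)}{\zeta(s)}\frac{x^s}{s}\,ds + R_1(x,T,c),
\end{equation*}
with $c$ chosen just above $1$ (for instance $c = 1 + 1/\log x$, so that $x^c = ex$) and $R_1$ an explicit Perron truncation error. It would be natural to invoke the companion Perron-formula result of \cite{CH_DJ_Perron1} to bound $R_1$ by $O(x\log x/T)$; the upper constraint $T<(x^\alpha-2)/2$ almost certainly enters as a hypothesis of that result, which would explain why the parameter $\alpha$ appears in \eqref{oldRvMpsi}.

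Next I would close the contour to the left by a vertical segment at $\mathrm{Re}(s)=-U$ with $U$ large, and compute the enclosed residues of $-\zeta'(s)/\zeta(s)\cdot x^s/s$. The pole of $-\zeta'/\zeta$ at $s=1$ contributes the main term $x$; the simple poles at non-trivial zeros $\rho$ with $|\gamma|\leq T$ contribute $-\sum x^\rho/\rho$; the pole of $x^s/s$ at $s=0$ contributes $-\zeta'(0)/\zeta(0)=-\log(2\pi)$; and the trivial zeros at $s=-2k$ contribute $x^{-2k}/(2k)$, summing to $-\tfrac12\log(1-x^{-2})$ as $U\to\infty$. The last two pieces are $O(1)$ and so can be absorbed into $E(x,T)$.

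The analytic content then lies in bounding the three remaining boundary integrals. The left vertical piece at $\mathrm{Re}(s)=-U$ is handled using the functional equation and vanishes as $U\to\infty$. The horizontal pieces at height $\pm T$ are the crux: using the standard Hadamard partial-fraction representation
\begin{equation*}
    \frac{\zeta'(s)}{\zeta(s)} = \sum_{|\gamma-t|\leq 1}\frac{1}{s-\rho} + O(\log |t|),
\end{equation*}
together with the Riemann--von Mangoldt bound $N(T+1)-N(T-1)=O(\log T)$, one obtains $|\zeta'(s)/\zeta(s)| = O(\log^2 T)$ on a horizontal line $\mathrm{Im}(s)=T'$, provided $T'$ is not too close to any ordinate. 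A pigeonhole choice of $T'\in[T,T+1]$ secures this. Integrating $\zeta'(s)/\zeta(s)\cdot x^s/s$ over the segment from $-U+iT'$ to $c+iT'$ then produces a contribution of size $O(x\log^2 T/T)$, which combined with $R_1$ yields the $O(x\log x/T)$ shape claimed in \eqref{oldRvMpsi}.

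The main obstacle is numerical rather than structural: every ingredient above must be made fully explicit with concrete constants. This means an explicit partial-fraction formula for $\zeta'/\zeta$, explicit estimates for $N(T+1)-N(T-1)$, an explicit Perron bound for $R_1$, and explicit bounds for $\zeta(s)$ and $\zeta'(s)$ throughout the critical strip and for $\mathrm{Re}(s)$ negative. Threading these together into a small $M$ --- the quoted values $5.03$ at $\alpha=\tfrac12$ and $0.5597$ at $\alpha=\tfrac{1}{100}$ leave essentially no slack --- requires careful optimisation of $c$, $U$, and the wiggle in $T'$. The tension between the two admissible $(\alpha,M)$ pairs points to a genuine trade-off between how large $T$ may be relative to $x$ and how small $M$ can be, and tracking that trade-off cleanly is the principal bookkeeping challenge.
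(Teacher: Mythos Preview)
This theorem is not proved in the present paper; it is quoted from the authors' earlier work \cite[Thm.~1.2]{CH_DJ_Perron1}. The paper only sketches the method: one writes the truncated Perron formula \eqref{Perron} with $a_n=\Lambda(n)$ and $\kappa=1+1/\log x$, then (i) bounds the Perron remainder $R(x,T)$ by an explicit version of Ramar\'e's method \cite[Thm.~2.1]{CH_DJ_Perron1}, and (ii) converts the integral $\tfrac{1}{2\pi i}\int(-\zeta'/\zeta)(s)\,x^s/s\,\mathrm{d}s$ into $x-\sum_{|\gamma|\le T}x^\rho/\rho$ plus an explicit error via \emph{Wolke's method} \cite[Thm.~4]{CH_DJ_Perron1} (restated here as Theorem~\ref{wolkeprop}). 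The latter step feeds in explicit zero-free regions for $\zeta(s)$ and is what produces the small constants in Table~4 of \cite{CH_DJ_Perron1}.

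Your plan agrees with the paper on step~(i) --- you invoke the companion Perron bound --- but departs on step~(ii): instead of Wolke's argument you propose the textbook contour shift with a pigeonhole choice of $T'\in[T,T+1]$ and the crude bound $|\zeta'/\zeta(\sigma+iT')|=O(\log^2 T)$. Two remarks. First, your stated horizontal contribution $O(x\log^2 T/T)$ is written too pessimistically: integrating $x^\sigma$ from $-1$ to $c=1+1/\log x$ gains a factor $1/\log x$, so the bound is actually $O\!\left(x\log^2 T/(T\log x)\right)$, which under $T<x^\alpha$ is $O(\alpha^2 x\log x/T)$ and does match the asymptotic shape of \eqref{oldRvMpsi}. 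Second, the pigeonhole replaces $T$ by $T'$, so you must also account for the $O(\log T)$ zeros with $T<|\gamma|\le T'$, each contributing $O(x/T)$; this is harmless for the asymptotic but adds to the constant.

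The genuine gap is numerical. Wolke's method exploits zero-free region information and avoids both the pigeonhole loss and the blunt $\log^2 T$ bound; this refinement is what drives $M$ down to $0.5597$ at $\alpha=1/100$. Your route, relying on explicit forms of $N(T+1)-N(T-1)\ll\log T$ and the partial-fraction estimate for $\zeta'/\zeta$, yields the right order of magnitude but with constants that are very unlikely to reach the quoted values --- those constants are an output of the specific Wolke--Ramar\'e machinery, not of the classical argument you outline.
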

This result is reached via the Perron formula, which expresses the partial sums of an arithmetic function as an integral over its corresponding Dirichlet series. In particular, the truncated Perron formula can be written as (e.g., see \cite[Thm.~5.2, Cor.~5.3]{montgomery2007multiplicative} or \cite[Chp.~4.4]{Murty_2013})
\begin{equation}\label{Perron}
    \sum_{n\leq x} a_n = \frac{1}{2\pi i} \int_{\kappa-iT}^{\kappa+iT} F(s)\frac{x^s}{s} \mathrm{d}s + R(x,T).
\end{equation}
Here, $R(x,T)$ is an error term, and the formula is valid for $x>0$, any Dirichlet series $F(s) = \sum_{n=1}^\infty a_n n^{-s}$ that converges in $\Re(s)>\kappa_a$, any $\kappa>\max(\kappa_a,0)$, and bounded $T>1$. The admissible range for the point of truncation $T$ is usually a function of $x$, and can be somewhat adjusted for explicit estimates.

In Section 3 of \cite{Wolke_1983}, Wolke gave a method to estimate the integral in \eqref{Perron}. This was made explicit in \cite[Thm.~4]{CH_DJ_Perron1} for the case $F(s)=\sum_{n\geq 1}\Lambda(n) n^{-s} = -(\zeta'/\zeta)(s)$ and $\kappa=1+1/\log x$. The authors then adapted a method of Ramar{\'e} in \cite[Thm.~1.1, Thm.~2.1]{Ramare_16_Perron} to estimate $R(x,T)$, \cite[Thm.~2.1]{CH_DJ_Perron1}. Combining these two estimates in (\ref{Perron}) resulted in Theorem \ref{oldRvM}. 

In \cite[Thm.~1.2, Thm.~3.1]{Ramare_16_Perron}, Ramar{\'e} then proved a weighted and averaged version of \eqref{Perron} with an asymptotically sharper estimate for the error term. The purpose of this paper is to make this method of Ramar{\'e} explicit, and apply it in the context of the Riemann--von Mangoldt formula. Although \cite[Thm.~3.1]{Ramare_16_Perron} is already explicit, there are several errors in the proofs and statements of Theorems 1.2 and 3.1 of \cite{Ramare_16_Perron} that we need to correct or circumvent. For instance, there is a missing factor of $k+1$ in the error term of \cite[Thm.~3.1]{Ramare_16_Perron}, a typo in the definition (3.1) of \cite{Ramare_16_Perron}, and some measure-theoretic problems which were confirmed by Ramar{\'e} through private correspondence (see also \cite{ramareaddendum}). After mending these errors, and simplifying the result for our purposes, our main theorem is as follows. 

\begin{theorem}\label{mainthm}
    For any $\alpha\in(0,1/2]$ and $\omega\in[0,1]$ there exist constants $M$ and $x_M$ such that for some $T^*\in [T,2T]$ with $\max\{51,\log^2 x\}<T<(x^{\alpha}-2)/4$, we have
    \begin{equation}\label{mainthmpsi}
        \psi(x)=x-\sum_{\substack{|\gamma|\leq T^*}}\frac{x^\rho}{\rho}+O^*\left(M\frac{x(\log x)^{1-\omega}}{T}\right)
    \end{equation}
    for all $x\geq x_M$. In Table \ref{maintable}, some admissible values of $x_M$ and $M$ are given for specific choices of $\alpha$.
\end{theorem}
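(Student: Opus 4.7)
The plan is to combine an explicit version of Ramaré's weighted Perron formula with a standard contour-shift argument, broadly following the strategy of \cite{CH_DJ_Perron1} but with a sharper treatment of the Perron error term. Starting from the truncated Perron formula \eqref{Perron} applied to $F(s)=-\zeta'(s)/\zeta(s)$ with $a_n=\Lambda(n)$ and $\kappa=1+1/\log x$, we have
\[
\psi(x)=\frac{1}{2\pi i}\int_{\kappa-iT^*}^{\kappa+iT^*}\!\!\left(-\frac{\zeta'}{\zeta}(s)\right)\frac{x^s}{s}\,ds+R(x,T^*),
\]
and the point of departure from Theorem~\ref{oldRvM} is the bound on $R(x,T^*)$: rather than the pointwise estimate of \cite[Thm.~2.1]{CH_DJ_Perron1}, I would use the averaged/weighted version from \cite[Thms.~1.2, 3.1]{Ramare_16_Perron}, after implementing the errata flagged in the introduction (the missing factor of $k+1$ in the Thm.~3.1 error term, the typo in definition (3.1), and the measure-theoretic issues noted in \cite{ramareaddendum}).

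For the weighted Perron step, the idea is to introduce a smooth non-negative weight supported on $[T,2T]$ and to control the resulting averaged error; a mean-value/pigeonhole argument then produces some truncation height $T^*\in[T,2T]$ at which $|R(x,T^*)|$ is of size $Mx(\log x)^{1-\omega}/T$. The parameter $\omega\in[0,1]$ reflects the strength of the averaging placed on the weight: $\omega=0$ essentially recovers the pointwise bound, while $\omega=1$ delivers the full logarithmic saving announced in the abstract. I would keep track of all implicit constants, proving the explicit intermediate bounds on $|(\zeta'/\zeta)(\kappa+it)|$ and on the truncation tails $\sum_n \Lambda(n) n^{-\kappa}\min\{1,1/(T|\log(x/n)|)\}$ needed for the averaged estimate, leveraging the explicit bounds on $\log|\zeta|$ and $\zeta'/\zeta$ already collected in \cite{CH_DJ_Perron1}.

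For the contour step, I would shift the segment $[\kappa-iT^*,\kappa+iT^*]$ to the left, picking up the pole at $s=1$ (the main term $x$), the non-trivial zeros with $|\gamma|\leq T^*$ (contributing $-\sum x^\rho/\rho$), the pole at $s=0$, and the trivial zeros. The horizontal segments at heights $\pm T^*$ and the far-left vertical segment are controlled by standard explicit bounds on $|\zeta'/\zeta|$ away from zeros, together with the freedom in choosing $T^*$ to avoid ordinates $\gamma$ too close to $T^*$, a technical manoeuvre already carried out in \cite{CH_DJ_Perron1}. These contour contributions are absorbed into $O^*(Mx(\log x)^{1-\omega}/T)$; note that the strengthened hypothesis $T>\log^2 x$ (in place of $T>\log x$ in Theorem~\ref{oldRvM}) is what lets the $(\log x)^{-\omega}$ saving survive against the contour error.

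The main obstacle will be executing the weighted Perron step explicitly: turning Ramaré's averaging argument into a clean numerical bound requires choosing the weight carefully, isolating the dependence on $\omega$, and, crucially, repairing the measure-theoretic gap in \cite{Ramare_16_Perron} so that the ``there exists $T^*$'' conclusion is genuinely valid. A secondary obstacle is optimising the admissible triples $(\alpha,x_M,M)$ in Table~\ref{maintable}, which will come down to balancing the explicit constants from the two steps against the constraint $T<(x^\alpha-2)/4$.
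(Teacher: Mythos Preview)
Your two-step outline (weighted/averaged Perron $\Rightarrow$ existence of a good $T^*\in[T,2T]$, then contour shift) matches the paper's architecture, but you have the roles of the two steps reversed, and this would derail the execution.

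In the paper the parameter $\omega$ is \emph{not} a feature of the weighted Perron step at all; it lives entirely on the contour-shift side. Theorem~\ref{wolkeprop} (the Wolke-type estimate, already proved in \cite{CH_DJ_Perron1}) evaluates the vertical integral as $x-\sum_{|\gamma|\leq T^*}x^\rho/\rho+O^*\bigl(Kx(\log x)^{1-\omega}/T\bigr)$ for any $\omega\in[0,1]$, and that is the sole source of the factor $(\log x)^{1-\omega}$ in \eqref{mainthmpsi}. The averaged Perron step carries no $\omega$: its purpose is simply to replace the $O(x\log x/T)$ Perron remainder underlying Theorem~\ref{oldRvM} by something that is essentially $O(x/T)$, so that the contour error becomes the bottleneck instead of being swamped. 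Correspondingly, the hypothesis $T>\log^2 x$ is there to tame the \emph{Perron} side (e.g.\ the secondary term $x\log x/T^2$ arising from $\sum_n\Lambda(n)n^{-\kappa}$), not the contour side as you suggest.

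Relatedly, the averaged Perron error is not of the shape $\sum_n\Lambda(n)n^{-\kappa}\min\{1,1/(T|\log(x/n)|)\}$ that you propose to bound. After fixing a concrete $(1,2)$-admissible weight and extracting $T^*$ by continuity, Corollary~\ref{newCor-MTPerron} leaves an error containing
\[
\int_{\theta'/T}^{\lambda}\frac{1}{u^{3}}\sum_{|\log(x/n)|\leq u}\Lambda(n)\,\mathrm{d}u,
\]
i.e.\ a short-interval sum of $\Lambda$ over an interval of length $\asymp ux$. The decisive input you are missing is the explicit Brun--Titchmarsh theorem of Montgomery--Vaughan, which bounds $\theta(x_+)-\theta(x_-)$ (and, after adjusting by $\psi-\theta$, the corresponding $\psi$-difference) by $O(ux)$ uniformly in $u$; this is precisely what collapses the Perron remainder to $O(x/T)$. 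Without that ingredient the Perron step does not close, regardless of how the weight is chosen.
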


\def\arraystretch{1.5}
\begin{table}[h]
\centering
\caption{Some corresponding values of $x_M$, $\alpha$, $\lambda$ and $M$ for Theorem~\ref{mainthm}. Here $\lambda$ is a parameter which we optimise over in the proof of the theorem. The last two entries are specifically used to prove Theorem \ref{powerthm}}
\begin{tabular}{|c|c|c|c|c|}
\hline
$\log(x_M)$ & $\alpha$ & $\omega$ & $\lambda$ & $M$\\
\hline
$40$ & $1/2$ & $0$ & $0.43$ & $2.894$\\
\hline
$10^3$ & $1/2$ & $0$ & $0.42$ & $1.681$\\
\hline
$10^{10}$ & $1/2$ & $0.3$ & $0.42$ & $2.275$\\
\hline
$10^{13}$ & $1/2$ & $1$ & $10^{-4}$ & $19.81$\\
\hline
$10^3$ & $1/10$ & $0.2$ & $0.07$ & $1.260$\\
\hline
$10^{10}$ & $1/10$ & $0.9$ & $10^{-3}$ & $4.415$\\
\hline
$10^3$ & $1/100$ & $0.8$ & $0.07$ & $3.615$\\
\hline
$10^{10}$ & $1/100$ & $1$ & $10^{-3}$ & $9.631$\\
\hline
$10^{3}$ & $1/85$ & $0.9$ & $0.07$ & $6.391$\\
\hline
$4\cdot 10^{3}$ & $1/85$ & $0.9$ & $0.05$ & $5.462$\\
\hline
\end{tabular}
\label{maintable}
\end{table}

Although we now have less control over the point of truncation, for most applications Theorem \ref{mainthm} implies that the factor of $\log x$ in \eqref{oldRvMpsi} can be reduced or removed. At first glance, it may seem preferable to just take $\omega = 1$. However, the corresponding $M$ may be such that the overall error estimate is larger for a wide range of $x$, compared to the $M$ one obtains with a smaller $\omega$. Hence, for small to moderate values of $x$ the best estimate will often come from using some $\omega<1$.

It would also be possible to sharpen the explicit result: we see two immediate avenues. In the proof of Theorem \ref{mainthm}, one could set $\kappa$ to be another function such as $\kappa = 1+c/\log x$ and optimise over $c$. This would also require reworking \cite[Thm.~4]{CH_DJ_Perron1} for such a choice of $\kappa$. Another potential improvement is to modify the range for $T^*$ (currently $[T,2T]$). This would amount to optimising the choice of the parameter $\xi>1$ which we introduce in \S\ref{sectRam}. Either of these improvements would be rather systematic to pursue, but we have chosen not to as they would complicate and lengthen this paper for only a small to moderate improvement. In particular, neither of these improvements would improve the asymptotic form of \eqref{mainthmpsi}. Moreover, the main error terms in the proof of Theorem \ref{mainthm} rely more on other results, such as the size of the zero-free region of $\zeta(s)$.

We also have the following variant of Theorem~\ref{mainthm} for primes in intervals.
\begin{theorem}\label{mainthm-intervals}
    Let $h=h(x)$ be any nonnegative function. For any $\alpha\in(0,1/2]$ and $\omega\in[0,1]$ there exist constants $M$ and $x_M$ such that for some $T^*\in [T,2T]$ with $\max\{51,\log^2 x\}<T<(x^{\alpha}-2)/4$,
    \begin{align*}
        \psi(x+h)-\psi(x)=h-&\sum_{|\gamma|\leq T^*}\frac{(x+h)^\rho-x^\rho}{\rho}\\
        &\quad+O^*\left(M\frac{x+h}{T}(\log(x+h))^{1-\omega}+M\frac{x}{T}(\log x)^{1-\omega}\right).
    \end{align*}
    for all $x\geq x_M$. In Table \ref{maintable}, some admissible values of $x_M$ and $M$ are given for specific choices of $\alpha$.
\end{theorem}

To demonstrate the power of our new estimates we use Theorem~\ref{mainthm-intervals} to obtain the following result.
\begin{theorem}\label{powerthm}
    There is at least one prime between $n^{90}$ and $(n+1)^{90}$ for all $n\geq 1$.
\end{theorem}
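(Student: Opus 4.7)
Set $x=n^{90}$ and $h=(n+1)^{90}-n^{90}$, so that $90\, n^{89}\leq h\leq 90(n+1)^{89}$ and in particular $h\sim 90\, x^{89/90}$ as $n\to\infty$. The strategy is to apply Theorem~\ref{mainthm-intervals} to show that $\psi(x+h)-\psi(x)>0$ for all sufficiently large $n$, and then handle the small remaining cases by other means. Given this, the existence of an actual prime (rather than merely a prime power) in $(n^{90},(n+1)^{90}]$ follows on passing from $\psi$ to $\theta$: the contribution of prime powers $p^{k}$ with $k\geq 2$ is $O(\sqrt{x+h}\,\log(x+h))$ by the standard estimate $\psi(x)-\theta(x)\leq 1.43\sqrt{x}$, which is dwarfed by the margin we will secure.

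\textbf{Main estimate.} I would apply Theorem~\ref{mainthm-intervals} with the parameters indicated by the last two rows of Table~\ref{maintable}: $\alpha=1/85$, $\omega=0.9$, and $T$ taken close to the maximum $(x^{1/85}-2)/4$. The choice $\alpha=1/85$ sits just above the critical exponent $1/90$, ensuring that the error term is $\ll x(\log x)^{0.1}/T\ll x^{84/85}(\log x)^{0.1}$ and is dominated by $h\sim 90\, x^{89/90}$ (since $84/85<89/90$); the slim gap $89/90-84/85=1/1530$ controls how large $x_{M}$ must be. It remains to bound
\[
S=\sum_{|\gamma|\leq T^{*}}\frac{(x+h)^{\rho}-x^{\rho}}{\rho}
\]
and to verify $|S|<h$ by a concrete margin.

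\textbf{Bounding the zero sum.} For each zero $\rho=\beta+i\gamma$ I would use the pointwise bound
\[
\left|\frac{(x+h)^{\rho}-x^{\rho}}{\rho}\right|\leq \min\!\left(h\, x^{\beta-1},\ \frac{2(x+h)^{\beta}}{|\gamma|}\right),
\]
and split the zeros at the height $H_{0}$ to which the Riemann Hypothesis has been numerically verified (currently $H_{0}\approx 3\cdot 10^{12}$). For $|\gamma|\leq H_{0}$ one has $\beta=1/2$; explicit estimates for $N(T)$ combined with the min-bound then give a total contribution of size $O(\sqrt{x}\,\log^{2}H_{0})$, negligible compared to $h$. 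For $H_{0}<|\gamma|\leq T^{*}$ I would invoke an explicit zero-free region $\beta\leq 1-\eta(|\gamma|)$---classical $\eta(t)\asymp 1/\log t$ in the moderate range, and Korobov--Vinogradov for very large $t$---to bound $x^{\beta-1}\leq e^{-\eta(T^{*})\log x}$ and then sum against an explicit estimate for $N(T^{*})$. Since $T^{*}\asymp x^{1/85}$, Korobov--Vinogradov makes $\eta(T^{*})\log x\gtrsim (\log x)^{1/3}(\log\log x)^{-1/3}$, which beats any polynomial in $\log x$ and yields $|S|=o(h)$.

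\textbf{Completing the argument and main obstacle.} Combining the above gives $\psi(x+h)-\psi(x)>0$ for all $n\geq N_{0}$ with an explicit $N_{0}$. The remaining range $n<N_{0}$ I would dispose of using a recent explicit short-interval prime-existence result of Baker--Harman--Pintz type, supplemented by direct computation for the very smallest $n$; this step is comfortable because $(n+1)^{90}-n^{90}$ grows much faster than the gap $x^{0.525+o(1)}$ afforded by Baker--Harman--Pintz. The main obstacle is the explicit bound on $S$: one has to balance taking $T^{*}$ large (to minimise the error in Theorem~\ref{mainthm-intervals}) against the growth of $N(T^{*})$ and the quality of the available zero-free regions, while extracting sharp enough numerical constants for $N_{0}$ to fall safely within the range covered by the available BHP-type results.
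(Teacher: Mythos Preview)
Your strategy has a genuine gap in the bound for the zero sum $S$ over the range $H_0<|\gamma|\leq T^*$. You propose to use only the zero-free region, bounding each term by $h\,x^{\beta-1}\leq h\,x^{-\eta(T^*)}$ and then summing against $N(T^*)$. But with $T^*\asymp x^{1/85}$ you have $N(T^*)\asymp x^{1/85}\log x$, which is a positive power of $x$, whereas the Korobov--Vinogradov saving $x^{-\eta(T^*)}=\exp\bigl(-c(\log x)^{1/3}(\log\log x)^{-1/3}\bigr)$ is sub-polynomial in $x$. Hence
\[
N(T^*)\,x^{-\eta(T^*)}\asymp x^{1/85}\log x\cdot\exp\!\bigl(-c(\log x)^{1/3}(\log\log x)^{-1/3}\bigr)\longrightarrow\infty,
\]
and your bound gives $|S|/h\to\infty$, not $|S|=o(h)$. (Your remark that the saving ``beats any polynomial in $\log x$'' is correct but irrelevant: $N(T^*)$ is polynomial in $x$, not in $\log x$.) Taking $T$ smaller does not help, since $T$ must exceed $x^{1/90}$ for the $Mx/T$ error from Theorem~\ref{mainthm-intervals} to be $o(h)$, and any positive power of $x$ already defeats the zero-free region saving.

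The missing ingredient is an explicit zero-density estimate $N(\sigma,T)$. The paper (following \cite{cully2023primes}) writes
\[
\sum_{|\gamma|\leq 2T}x^{\beta-1}=2\int_0^1 x^{\sigma-1}\log x\!\!\sum_{\substack{0<\gamma<2T\\ \beta>\sigma}}1\;\mathrm{d}\sigma+2x^{-1}N(2T),
\]
inserts explicit bounds for $N(\sigma,T)$ (of shape $T^{A(1-\sigma)}\log^B T$) together with the zero-free region to truncate the integral near $\sigma=1$, and then optimises the exponent $\mu$ in $T=x^\mu/2$. The density input is what converts the crude count $N(T^*)$ into something that can be absorbed. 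A secondary point: for the small-$n$ range the paper does not appeal to Baker--Harman--Pintz (which is not available in a directly usable explicit form here) but to explicit long-interval results of Cully-Hugill--Lee and the computations of \cite{O_H_P_14}.
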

This improves upon the previous result in \cite[Thm.~1.3]{CH_DJ_Perron1}, which found primes between consecutive $140^{\text{th}}$ powers. Here, the main improvement comes from our refined error term in Theorem \ref{mainthm-intervals}. However, we have also incorporated some recent zero-free regions \cite{bellotti2024explicit,mossinghoff2024explicit,yang2024explicit} to ensure that Theorem \ref{powerthm} is state-of-the-art.

\subsection{Outline of paper}
In Section \ref{sectRam} we make explicit a weighted averaged version of the truncated Perron formula. From this, we deduce a corollary in Section \ref{sectCor} for a ``modified'' version of the truncated Perron formula for general arithmetic functions. This corollary is then used in conjunction with \cite[Thm.~4.1]{CH_DJ_Perron1} to obtain Theorems~\ref{mainthm} and \ref{mainthm-intervals} in Section \ref{sectMain}. Finally, the proof of Theorem~\ref{powerthm} is given in Section \ref{sectPowers} and other supplementary results are given in the appendices \ref{appzf} and \ref{appk}.

We remark that in Sections \ref{sectCor}, \ref{sectMain}, \ref{sectPowers} and Appendix \ref{appk} some computations are required. Further details about these computations can be found at
\begin{equation}\label{codeeq}\tag{\textbf{CODE}}
    \href{https://github.com/DJmath1729/Riemann-von-Mangoldt-estimates}{github.com/DJmath1729/Riemann-von-Mangoldt-estimates}    
\end{equation}
which is linked with the arXiv version of this paper.

\section{An averaged, weighted, truncated Perron formula}\label{sectRam}
In this section we prove Theorem \ref{mainramthm} stated below, which is an error estimate for a weighted averaged version of the truncated Perron formula. This result is a modification of \cite[Thm. 3.1]{Ramare_16_Perron} and its proof also fixes several minor errors appearing in \cite{Ramare_16_Perron}. 

First, some notation: let $k\in\N$, $\xi\geq 1$. We call a function $w$, defined over $[1,\xi]$, $(k,\xi)$-admissible if it has the following properties:
\begin{itemize}
    \item[(1)] it is $k$-times differentiable, and $w^{(k)}$ is in $L^1$,
    \item[(2)] $\int_1^\xi w(t)\mathrm{d}t=1$,
    \item[(3)] $w^{(\ell)}(1)=w^{(\ell)}(\xi)=0$ for $0\leq\ell\leq k-2$. This condition is empty when $k=1$. 
\end{itemize}
For such a function $w$, we define 
\begin{align}
    N_{k,\xi}(w)&=\frac{1}{2\pi}\left(\frac{1}{\xi}|w^{(k-1)}(\xi)|+|w^{(k-1)}(1)|+k!\sum_{0\leq h\leq k}\frac{1}{h!}\int_1^\xi\frac{|w^{(h)}(u)|}{u}\mathrm{d}u\right),\label{Ndef}\\
    \mathscr{L}_{\xi}(w)&=\frac{1}{\pi}\int_{1}^\xi u|w(u)|\mathrm{d}u\label{Ldef}
\end{align}
and $\theta'=\theta'_{k,\xi}(w)$ is the positive point of intersection of the two functions
\begin{equation}\label{f1f2def}
    f_1(y)=\frac{2N_{k,\xi}(w)}{|y|^{k+1}}\quad\text{and}\quad f_2(y)=1+|y|\mathscr{L}_{\xi}(w).
\end{equation}
Note that such a point of intersection exists since $N_{k,\xi}(w)$ and $\mathscr{L}_{\xi}(w)$ are positive. Moreover, we remark that $\theta'$ is absent in Ramar\'e's analogous argument in \cite{Ramare_16_Perron}. Rather, on page 122 of \cite{Ramare_16_Perron}, Ramar\'e defines a variable $T'=T/\theta$, and the optimum value of $\theta$ is given by $\theta'$ in our notation. Our aim is to prove the following theorem.
\begin{theorem}\label{mainramthm}
    Let $k\geq 1$ be an integer and let $\xi>1$ be a real number. Let $w$ be a $(k,\xi)$-admissible function. Let $F(s)=\sum_{n\geq 1}a_n/n^s$ be a Dirichlet series that converges absolutely for $\Re(s)>\kappa_a$. For $x\geq 1$, $T\geq 1$, and $\kappa>\kappa_a>0$ we have
    \begin{align}\label{lemmaeq}
        \sum_{n\leq x}a_n &= \frac{1}{2\pi i}\int_{T}^{\xi T}\int_{\kappa-it}^{\kappa+it}F(s)\frac{x^s}{s} \mathrm{d}s \frac{w(t/T)}{T} \mathrm{d}t\notag\\
        &+O^*\left((k+1)\frac{2 N_{k,\xi}(w)}{T^{k+1} }\int_{\theta'/T}^\infty\sum_{|\log(x/n)|\leq u}\frac{|a_n|}{n^{\kappa}}\frac{x^{\kappa}\mathrm{d}u}{u^{k+2}}\right).
    \end{align}
\end{theorem}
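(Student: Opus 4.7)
The approach reduces \eqref{lemmaeq} to a pointwise kernel estimate which is then packaged via the threshold $\theta'$. First, using absolute convergence on $\Re s = \kappa > \kappa_a$, I would interchange the sum defining $F$ with the double integral on the right of \eqref{lemmaeq}, writing that integral as $\sum_n a_n\,\widetilde{P}(x/n)$ where
$$\widetilde{P}(y) = \int_T^{\xi T}\frac{1}{2\pi i}\int_{\kappa-it}^{\kappa+it} y^s \frac{ds}{s}\,\frac{w(t/T)}{T}\,dt.$$
Combining the pointwise Perron identity $\lim_{t\to\infty}\frac{1}{2\pi i}\int_{\kappa-it}^{\kappa+it} y^s\,ds/s = [y>1]$ (for $y \neq 1$) with admissibility condition (2), $\int_1^\xi w = 1$, gives $\widetilde{P}(y) = [y>1] + \mathcal{E}(y)$ where
$$\mathcal{E}(y) = \int_T^{\xi T}\bigl(P(y,t) - [y>1]\bigr)\,\frac{w(t/T)}{T}\,dt.$$
This reduces the theorem to a uniform bound on $\sum_n |a_n|\,|\mathcal{E}(x/n)|$ of the required shape.

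Next, I would derive two complementary bounds on $\mathcal{E}(y)$. A direct bound on $P(y,t)$ (one that remains useful near $y = 1$, where differentiation-based estimates break down), combined with the weighted first moment $\mathscr{L}_\xi(w)$, gives an estimate of the form $|\mathcal{E}(y)| \leq y^\kappa\,f_2(T|\log y|)$ modulo normalization. The sharp estimate comes from $k$ successive integrations by parts in $t$, transferring derivatives from the oscillatory factor $y^{\pm it}/(\kappa\pm it)$ onto the smooth weight $w(t/T)$. Admissibility conditions (1) and (3) are calibrated so that after $k$ steps only boundary terms in $w^{(k-1)}$ at the endpoints $t = T,\xi T$ remain, together with bulk integrals involving $w^{(h)}$ for $0\leq h\leq k$; these pieces assemble into $2\pi N_{k,\xi}(w)$. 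Each step also furnishes a factor $1/|\log y|$ by integrating the oscillation $y^{\pm it}$, giving
$$|\mathcal{E}(y)| \leq \frac{2\,N_{k,\xi}(w)\,y^\kappa}{T^{k+1}|\log y|^{k+1}},$$
which in the variable $v = T|\log y|$ is precisely the function $f_1$.

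With both bounds available, the layer-cake identity $f_1(v) = (k+1)\int_v^\infty 2N_{k,\xi}(w)/u^{k+2}\,du$ combined with the definition of $\theta'$ as the intersection of $f_1$ and $f_2$ yields the uniform estimate
$$\min\bigl(f_1,f_2\bigr)(v) \leq f_1\bigl(\max(v,\theta')\bigr) = (k+1)\int_{\max(v,\theta')}^\infty \frac{2\,N_{k,\xi}(w)}{u^{k+2}}\,du.$$
Substituting $v = T|\log(x/n)|$, summing $|a_n|\,x^\kappa/n^\kappa$ against this, and swapping the $n$-sum with the $u$-integral by Fubini (after rescaling $u \mapsto uT$) produces the stated error. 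The main obstacle is the integration-by-parts bookkeeping in Step two: every boundary and bulk contribution must be tracked and shown to assemble exactly into $N_{k,\xi}(w)$ with the overall constant $k+1$. As flagged in the excerpt, this is where \cite{Ramare_16_Perron} has the missing factor $k+1$ and the measure-theoretic subtleties, so I would isolate the $n = x$ case separately (avoiding differentiation of the discontinuous $[y>1]$ under the integral) and apply condition (3) explicitly when computing the endpoint terms at $t = T,\xi T$.
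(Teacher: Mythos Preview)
Your proposal is correct and follows essentially the same route as the paper. The paper isolates your kernel estimate as a separate lemma (Lemma~\ref{averagelem}), obtained by an explicit rectangular contour shift followed by $k$ integrations by parts in the weight variable, and then runs the identical threshold-at-$\theta'$ plus layer-cake argument you describe; the only point of exposition worth tightening is that the power $|\log y|^{-(k+1)}$ arises from $k$ integrations by parts in $t$ together with one integration in the real direction $u$ after the contour shift, rather than from $k+1$ integrations by parts alone.
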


To prove Theorem \ref{mainramthm} we need the following lemma that uses the function $$v(y) = \begin{cases} 1 \quad y\geq 0 \\ 0 \quad y<0,\end{cases}$$ 
or more concisely $v(y)=\mathbb{1}_{[0,\infty)}(y)$, where $\mathbb{1}_{S}(y)$ is the indicator function of $S\subseteq\R$.

\begin{lemma}\label{averagelem}
    Let $w$ be a $(k,\xi)$-admissible function. For $\kappa'>0$ and $y\in \mathbb{R}$, 
    \begin{equation*}
        \left| v(y) - \frac{1}{2\pi i} \int_1^\xi\int_{\kappa'-i\tau}^{\kappa'+i\tau} \frac{e^{ys}}{s} \mathrm{d}sw(\tau)\mathrm{d}\tau\right|\leq \Delta(y),
    \end{equation*}
    where
    \begin{align}\label{eq:averagelem}
        \Delta(y)=
        \begin{cases}
            \min\left\lbrace \frac{2 e^{y\kappa'}}{|y|^{k+1}}N_{k,\xi}(w), \left| v(y) -\frac{e^{y\kappa'}}{\pi} \arctan(1/\kappa') \right| + |y|e^{y\kappa'}\mathscr{L}_\xi(w) \right\rbrace,&y\neq 0,\\
            \left| v(y) -\frac{e^{y\kappa'}}{\pi} \arctan(1/\kappa') \right| + |y|e^{y\kappa'}\mathscr{L}_\xi(w),&y=0.
        \end{cases}
    \end{align}
\end{lemma}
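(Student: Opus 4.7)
My plan is to reduce the inequality to controlling a single contour integral and then produce the two bounds that form the minimum in \eqref{eq:averagelem}.

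First I would establish the identity underpinning both bounds. Starting from the classical Perron formula $v(y)=\frac{1}{2\pi i}\int_{\kappa'-i\infty}^{\kappa'+i\infty}\frac{e^{sy}}{s}\,ds$ for $y\neq 0$, subtracting the averaged truncated integral from $v(y)$ (using $\int_1^\xi w(\tau)\,d\tau=1$) and swapping the order of integration (by Fubini on truncations, then a limit) gives
\[
v(y)-\frac{1}{2\pi i}\int_1^\xi\!\int_{\kappa'-i\tau}^{\kappa'+i\tau}\frac{e^{sy}}{s}\,ds\,w(\tau)\,d\tau=\frac{e^{\kappa'y}}{2\pi}\int_{\mathbb{R}}\frac{e^{ity}W(t)}{\kappa'+it}\,dt,
\]
where $W(t):=\int_1^{\min(|t|,\xi)}w(\tau)\,d\tau$ on $|t|\geq 1$ and $W(t):=0$ on $|t|<1$. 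The $y=0$ case runs through the same identity with an additive $1/2$ correction, since the principal-value integral equals $1/2\neq v(0)=1$; this correction is absorbed into the $|v(0)-\arctan(1/\kappa')/\pi|$ term of bound (B).

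For bound (A), I would integrate by parts $k+1$ times in $t$ in the right-hand side above. The $(k,\xi)$-admissibility of $w$ forces $W^{(h)}(\pm 1)=W^{(h)}(\pm\xi)=0$ for $0\leq h\leq k-1$, so the first $k$ boundary contributions at $|t|\in\{1,\xi\}$ vanish; at infinity the decay $1/(\kappa'+it)^{j+1}$ ensures the boundary terms there also vanish in the improper sense. Applying Leibniz to $\partial_t^k\bigl[W(t)/(\kappa'+it)\bigr]=\sum_{h=0}^k\binom{k}{h}W^{(h)}(t)\cdot(-i)^{k-h}(k-h)!/(\kappa'+it)^{k-h+1}$ and then integrating by parts once more on the $h=k$ (slowest-decaying) summand produces the boundary contributions $|w^{(k-1)}(1)|$ and $|w^{(k-1)}(\xi)|/\xi$ seen in $N_{k,\xi}(w)$, while the remaining $h$-pieces, bounded via $|\kappa'+it|\geq |t|$, assemble into $k!\sum_{h}(1/h!)\int_1^\xi |w^{(h)}(u)|/u\,du$. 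The factor of $2$ in $2N_{k,\xi}(w)$ accounts for the $t>0$ and $t<0$ halves of the integral.

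For bound (B), I would parametrise $F(\tau):=\frac{1}{2\pi i}\int_{\kappa'-i\tau}^{\kappa'+i\tau}e^{sy}/s\,ds=\frac{e^{\kappa'y}}{\pi}\int_0^\tau\Re[e^{ity}/(\kappa'+it)]\,dt$ and split the integrand as $\Re[1/(\kappa'+it)]+\Re[(e^{ity}-1)/(\kappa'+it)]$. The first summand, on the $\int_0^1$ piece, contributes the reference value $e^{\kappa'y}\arctan(1/\kappa')/\pi$. For the second summand I use the identity $\Re[(e^{ity}-1)(\kappa'-it)]=\kappa'(\cos(ty)-1)+t\sin(ty)$ together with $|e^{ity}-1|\leq |ty|$, which extracts the factor $|y|$. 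After averaging against $w$ and interchanging the orders of $\tau$ and $t$, the antiderivative $\bar w(t):=\int_t^\xi w(u)\,du$ appears; the bound $\int_1^\xi |\bar w(t)|\,dt\leq\int_1^\xi u|w(u)|\,du=\pi L_\xi(w)$ then gives the $|y|e^{\kappa'y}L_\xi(w)$ contribution. The triangle inequality against the reference completes bound (B).

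The hard part will be the bookkeeping in the $k+1$ integrations by parts for bound (A): tracking every boundary contribution across the piecewise discontinuities of $W^{(k)}$ at $|t|\in\{1,\xi\}$ and matching the precise combinatorial coefficients in $N_{k,\xi}(w)$. Secondary technical points are justifying the limiting manipulations for the conditionally convergent integrals at infinity, and, in bound (B), arranging that any residual non-$y$-dependent terms are absorbed by the sign structure of $v(y)-e^{\kappa'y}\arctan(1/\kappa')/\pi$ rather than inflating the final constant.
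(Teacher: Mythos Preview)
Your treatment of bound (B) is close to the paper's and should go through; the paper is slightly more direct, decomposing $\int_{\kappa'-i\tau}^{\kappa'+i\tau}\frac{e^{ys}}{s}\,ds$ into $e^{y\kappa'}\int_{\kappa'-i\tau}^{\kappa'+i\tau}\frac{ds}{s}$ plus $ie^{y\kappa'}\int_{-\tau}^\tau\frac{e^{iyt}-1}{\kappa'+it}\,dt$, bounding the second piece by $2\tau|y|$, and then averaging against $w(\tau)$.

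There is, however, a genuine gap in your bound (A). After $k$ integrations by parts and the Leibniz expansion of $\partial_t^k[W(t)/(\kappa'+it)]$, you integrate by parts once more \emph{only} on the $h=k$ summand and bound the $h<k$ summands directly via $|\kappa'+it|\geq|t|$. That inequality gains no further power of $1/|y|$: those summands still carry only the factor $1/|y|^k$ from the first $k$ integrations, so your bound is $O(e^{\kappa'y}/|y|^k)$ rather than the required $O(e^{\kappa'y}/|y|^{k+1})$. If instead you integrate \emph{everything} by parts $k+1$ times you do recover $1/|y|^{k+1}$, but the constant produced is not $N_{k,\xi}(w)$: the $h=0$ term in the Leibniz expansion of $g^{(k+1)}$ involves $W\equiv 1$ on $|t|>\xi$ and contributes a tail integral absent from $N_{k,\xi}(w)$, and the remaining coefficients come out as $(k+1)!/(h'+1)!$ in front of $\int_1^\xi|w^{(h')}|/u\,du$ rather than $k!/h'!$. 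The paper gets the extra $1/|y|$ by a different mechanism entirely. For $y<0$ it completes the segment $[\kappa'-i\tau,\kappa'+i\tau]$ to a rectangle with far side at $\Re s=K\to+\infty$ (and $K\to-\infty$ for $y>0$); the far side vanishes, and after swapping the order of integration the horizontal sides become $\int_{\kappa'}^{\infty} e^{uy}\int_1^\xi\frac{e^{i\tau y}w(\tau)}{u+i\tau}\,d\tau\,du$. Now only $k$ integrations by parts in $\tau$ are needed; the missing factor $1/|y|$ comes from $\int_{\kappa'}^\infty e^{uy}\,du=e^{\kappa'y}/|y|$, and together with $|u+i\tau|\geq\tau$ this yields exactly the boundary terms $|w^{(k-1)}(1)|$, $|w^{(k-1)}(\xi)|/\xi$ and the coefficients $k!/h!$ that define $N_{k,\xi}(w)$. (A minor slip: $W(\pm\xi)=\int_1^\xi w=1$, not $0$; what you actually need, and what admissibility gives, is that $W\in C^{k-1}(\mathbb{R})$, which suffices to kill the interior boundary terms in your first $k$ integrations by parts.)
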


\begin{remark}
    Lemma \ref{averagelem} is very similar to Ramar\'e's \cite[Lemma 3.2]{Ramare_16_Perron}. However, due to some small errors in Ramar\'e's work, we still include a full proof here, For example, compared to \eqref{eq:averagelem}, in \cite[Lemma 3.2]{Ramare_16_Perron} the factor of 2 in front of $N_{k,\xi}(w)$ is missing, and there is an incorrect factor of $1/\pi$ in front of $\mathscr{L}_{\xi}(w)$.
\end{remark}

\begin{proof}
We begin by noting that
\begin{equation*}
    \int_{\kappa'-i\tau}^{\kappa'+i\tau} \frac{e^{ys}}{s}\mathrm{d}s = e^{y\kappa'} \int_{\kappa'-i\tau}^{\kappa'+i\tau} \frac{1}{s}\mathrm{d}s + i e^{y\kappa'}\int_{-\tau}^\tau \frac{e^{iyt}-1}{\kappa'+it}\mathrm{d}t.
\end{equation*}
The first integral is equal to $2i\text{arctan}(1/\kappa')$. For $y\neq 0$, the second integral can be bounded by $2\tau |y|$ using the identity
\begin{equation*}
    \left| \frac{e^{iyt}-1}{iyt} \right| = \left| \int_{0}^1 e^{iytu} \mathrm{d}u \right| \leq 1,
\end{equation*}
and note that the bound $|e^{iyt}-1|\leq |yt|$ can be separately verified for $y=0$. In particular,
\begin{equation*}
    \left|\int_{\kappa'-i\tau}^{\kappa'+i\tau} \frac{e^{ys}}{s}\mathrm{d}s-\frac{e^{y\kappa'}}{\pi}\arctan(1/\kappa')\right|\leq 2\tau|y|e^{y\kappa'},
\end{equation*}
so that, upon using $\int_1^\xi w(t)\mathrm{d}t=1$,
\begin{align*}
    \left| v(y) - \frac{1}{2\pi i} \int_1^\xi\int_{\kappa'-i\tau}^{\kappa'+i\tau} \frac{e^{sy}}{s} \mathrm{d}sw(\tau)\mathrm{d}\tau\right| &\leq \left| v(y) - \frac{e^{y\kappa'}}{\pi} \text{arctan}(1/\kappa') \right| + |y|e^{y\kappa'}\mathscr{L}_{\xi}(w).
\end{align*}
for all $y\in\R$.

To prove the other part of the $y\neq 0$ case in \eqref{averagelem}, we first consider $y<0$. For any $K>\kappa'$ we have
\begin{equation}\label{fourints}
    \left( \int_{\kappa'-i\tau}^{\kappa'+i\tau} + \int_{\kappa'+i\tau}^{K+i\tau} + \int_{K+i\tau}^{K-i\tau} + \int_{K-i\tau}^{\kappa'-i\tau} \right) \frac{e^{ys}}{s}\mathrm{d}s = 0.
\end{equation}
We multiply both sides of \eqref{fourints} by $w(\tau)$ and integrate over $\tau$ from 1 to $\xi$. The third integral can be bounded by
\begin{align}\label{3rdIntegral}
    \left| \int_1^\xi \int_{K+i\tau}^{K-i \tau} \frac{e^{ys}}{s}\mathrm{d}s w(\tau)\mathrm{d}\tau \right| &= \int_1^\xi \int_{\tau}^{-\tau} \frac{ie^{y(K+it)}}{K+it}\mathrm{d}t  w(\tau)\mathrm{d}\tau \notag\\
    &\leq \int_1^{\xi}\int_{-\tau}^{\tau} \frac{e^{yK}}{K}\mathrm{d}t \left|w(\tau)\right| \mathrm{d}\tau \leq \max_{[1,\xi]}|w(\tau)|\cdot\frac{\xi^2 e^{yK}}{K},
\end{align}
which goes to zero as $K\to \infty$, noting that $\max_{[1,\xi]}|w(\tau)|$ exists because $w(\tau)$ is differentiable and thus continuous on $[1,\xi]$. We now bound the integral from $\kappa'+i\tau$ to $K+i\tau$ in the limit $K\to \infty$, and note that the following argument also gives the same bound for the integral from $K-i\tau$ to $\kappa'-i\tau$. We start by writing
\begin{equation}\label{intswitch}
    \int_1^\xi\int_{\kappa'+i\tau}^{K+i\tau}\frac{e^{sy}}{s}\mathrm{d}sw(\tau)\mathrm{d}\tau=\int_{\kappa'}^Ke^{uy}\int_1^\xi\frac{e^{i\tau y}w(\tau)}{u+i\tau}\mathrm{d}\tau\mathrm{d}u.
\end{equation}
To deal with the inner integral, we set $f(\tau)=w(\tau)/(u+i\tau)$ and apply the Leibnitz formula for the derivative of a product. In particular, for any $m\geq 0$ we have
\begin{equation*}
    f^{(m)}(\tau)=\sum_{0\leq h\leq m}\binom{m}{h}\frac{i^{m-h}(m-h)!w^{(h)}(\tau)}{(u+i\tau)^{m-h+1}}.
\end{equation*}
Since $w^{(\ell)}(1)=w^{(\ell)}(\xi)=0$ for $0\leq\ell\leq k-2$, this implies
\begin{equation*}
    f^{(k-1)}(1)=\frac{w^{(k-1)}(1)}{u+i},\qquad f^{(k-1)}(\xi)=\frac{w^{(k-1)}(\xi)}{u+i\xi}.
\end{equation*}
Integrating by parts $k$ times thus gives
\begin{align*}
    \int_1^\xi\frac{e^{i\tau y}w(\tau)}{u+i\tau}\mathrm{d}\tau&=\frac{(-1)^{(k-1)}f^{(k-1)}(\xi)e^{\xi i y}}{(iy)^k}-\frac{(-1)^{k-1}f^{(k-1)}(1)e^{iy}}{(iy)^k}\\
    &\qquad +\frac{(-1)^{k}}{(iy)^k}\int_1^\xi e^{i\tau y}f^{(k)}(\tau)\mathrm{d}\tau.
\end{align*}
Substituting this into \eqref{intswitch} gives
\begin{align}
    &\int_1^\xi\int_{\kappa'+i\tau}^{K+i\tau}\frac{e^{sy}}{s}\mathrm{d}sw(\tau)\mathrm{d}\tau=\notag\\
    &\qquad\frac{(-1)^{k-1}w^{(k-1)}(\xi)}{(iy)^k}\int_{\kappa'}^K\frac{e^{(u+\xi i)y}}{u+i\xi}\mathrm{d}u-\frac{(-1)^{k-1}w^{(k-1)}(1)}{(iy)^k}\int_{\kappa'}^K\frac{e^{(u+i)y}}{u+i}\mathrm{d}u\notag\\
    &\qquad+\sum_{0\leq h\leq k}\binom{k}{h}\int_{1}^\xi\frac{(-1)^k i^{k-h}(k-h)!w^{(h)}(\tau)}{(iy)^k}\int_{\kappa'}^K\frac{e^{(u+i\tau)y}}{(u+i\tau)^{k-h+1}}\mathrm{d}u\mathrm{d}\tau.\label{threeint}
\end{align}
For the first integral, we bound $1/(u+i\xi)$ by $1/\xi$ and integrate $e^{uy}$ to obtain
\begin{equation*}
    \left|\frac{(-1)^{k-1}w^{(k-1)}(\xi)}{(iy)^k}\int_{\kappa'}^K\frac{e^{(u+\xi i)y}}{u+i\xi}\mathrm{d}u\right|\leq\frac{|w^{(k-1)}(\xi)|}{|y|^k}\cdot\frac{e^{\kappa'y}}{\xi|y|}
\end{equation*}
since $K\to \infty$. The other integrals in \eqref{threeint} are bounded in analogous ways to give
\begin{equation*}
    \int_1^\xi\int_{\kappa'+i\tau}^{K+i\tau}\frac{e^{sy}}{s}\mathrm{d}sw(\tau)\mathrm{d}\tau\leq \frac{2\pi e^{\kappa' y}N_{k,\xi}(w)}{|y|^{k+1}}.
\end{equation*}
Using this bound in \eqref{fourints} gives
\begin{equation*}
    \left| v(y) - \frac{1}{2\pi i} \int_{1}^\xi\int_{\kappa'-i\tau}^{\kappa'+i\tau} \frac{e^{ys}}{s}\mathrm{d}sw(\tau)\mathrm{d}\tau \right| \leq \frac{2e^{y\kappa'}}{|y|^{k+1}}N_{k,\xi}(w).
\end{equation*}

The case $y>0$ is almost identical, but for $K<0$ we begin with
\begin{equation}\label{fourints2}
    \left( \int_{\kappa'-i\tau}^{\kappa'+i\tau} + \int_{\kappa'+i\tau}^{K+i\tau} + \int_{K+i\tau}^{K-i\tau} + \int_{K-i\tau}^{\kappa'-i\tau} \right) \frac{e^{ys}}{s}\mathrm{d}s = 2\pi i.
\end{equation}
After multiplying both sides of \eqref{fourints2} by $w(\tau)$ and integrating over $\tau$ from 1 to $\xi$, we find that the third integral approaches 0 as $K\to -\infty$, as it has the same bound as in (\ref{3rdIntegral}). For the integrals from $\kappa'+i\tau$ to $K+i\tau$ and from $K-i\tau$ to $\kappa'-i\tau$, we have the same bounds as before in the limit $K\to -\infty$.
Using these estimates in \eqref{fourints2} gives
\begin{equation*}
    \left| 2\pi i - \int_{1}^\xi\int_{\kappa'-i\tau}^{\kappa'+i\tau} \frac{e^{ys}}{s}\mathrm{d}sw(\tau)\mathrm{d}\tau \right| \leq \frac{4\pi e^{y\kappa'}}{y^{k+1}}N_{k,\xi}(w),
\end{equation*}
and thus
\begin{equation*}
    \left| v(y) - \frac{1}{2\pi i}\int_{1}^\xi\int_{\kappa'-i\tau}^{\kappa'+i\tau} \frac{e^{ys}}{s}\mathrm{d}sw(\tau)\mathrm{d}\tau \right| \leq \frac{2 e^{y\kappa'}}{y^{k+1}}N_{k,\xi}(w).\qedhere
\end{equation*}
\end{proof}

\begin{proof}[{Proof of Theorem \ref{mainramthm}}]
We now prove Theorem \ref{mainramthm}. To do this, we need to bound
\begin{equation*}
    \left| \sum_{n\geq 1} a_n v(T\log(x/n)) - \frac{1}{2\pi i T}\int_{T}^{\xi T}\int_{\kappa-it}^{\kappa+it}F(s)\frac{x^s\mathrm{d}s}{s}w(t/T)\mathrm{d}t \right| x^{-\kappa}
\end{equation*}
for any $x\geq 1$, $T>0$, and $\kappa>\kappa_a>0$, where $\kappa_a$ is the abscissa of absolute convergence of $F(s)$. First, we take $\kappa = \kappa' T$ for any $\kappa'>0$, and use the substitutions $s=\tilde{s}T$ and $t = \tau T$ to obtain
\begin{align}\label{fullsumeq}
     &\left| \sum_{n\geq 1} a_n v(T\log(x/n)) - \frac{1}{2\pi i T}\int_{T}^{\xi T}\int_{\kappa-it}^{\kappa+it}F(s)\frac{x^s\mathrm{d}s}{s}w(t/T)\mathrm{d}t \right| x^{-\kappa}\notag\\
     &\qquad\leq \sum_{n\geq 1} \frac{|a_n|}{n^\kappa} \left| v(T\log(x/n)) - \frac{1}{2\pi i T} \int_{T}^{\xi T}\int_{\kappa-it}^{\kappa+it} \left(\frac{x}{n}\right)^{s}\frac{\mathrm{d}s }{s} w(t/T)\mathrm{d}t \right| \left(\frac{n}{x}\right)^{\kappa} \notag \\
     &\qquad= \sum_{n\geq 1} \frac{|a_n|}{n^\kappa} \left| v(T\log(x/n)) - \frac{1}{2\pi i T} \int_{1}^{\xi}\int_{\kappa'-i\tau}^{\kappa'+i\tau} e^{T\log\left(\frac{x}{n}\right)\tilde{s}}\frac{\mathrm{d}\tilde{s}}{\tilde{s}}w(\tau)\mathrm{d}\tau \right| e^{-\kappa' T\log(x/n)}.
\end{align}
Next, we apply Lemma \ref{averagelem}. For $\theta> 0$ we let
\begin{align}
    c_{\theta}=\max_{0\leq |y| < \theta} \Delta(y)  \label{c22},
\end{align}
where $\Delta(y)$ is as defined in \eqref{eq:averagelem}. We then set $y=T\log(x/n)$ in Lemma \ref{averagelem} and split the sum in \eqref{fullsumeq} at the value of $n$ such that $T|\log(x/n)|=\theta$. This gives
\begin{align*}
    &\left| \sum_{n\geq 1} a_n v(T\log(x/n)) - \frac{1}{2\pi i T}\int_{T}^{\xi T}\int_{\kappa-it}^{\kappa+it}F(s)\frac{x^s\mathrm{d}s}{s}w(t/T)\mathrm{d}t \right| x^{-\kappa}\\
    &\qquad\leq c_{\theta}\sum_{T|\log(x/n)|< \theta} \frac{|a_n|}{n^\kappa} + \frac{2 N_{k,\xi}(w)}{T^{k+1}} \sum_{T|\log(x/n)| \geq \theta} \frac{|a_n|}{n^\kappa |\log(x/n)|^{k+1}}.
\end{align*}
Now, since
\begin{align*}
    &\sum_{T|\log(x/n)| \geq \theta} \frac{|a_n|}{n^\kappa |\log(x/n)|^{k+1}} \\
    &\qquad=\sum_{T|\log(x/n)| \geq \theta}\frac{|a_n|}{n^{\kappa}}\int_{|\log(x/n)|}^\infty\frac{(k+1)}{u^{k+2}}\mathrm{d}u\\
    &\qquad=(k+1)\int_{\theta/T}^\infty\sum_{\theta/T\leq|\log(x/n)|\leq u}\frac{|a_n|}{n^{\kappa}}\frac{\mathrm{d}u}{u^{k+2}}\\
    &\qquad=(k+1)\int_{\theta/T}^\infty\sum_{|\log(x/n)|\leq u}\frac{|a_n|}{n^{\kappa}}\frac{\mathrm{d}u}{u^{k+2}}-\frac{T^{k+1}}{\theta^{k+1}}\sum_{|\log(x/n)|< \theta/T}\frac{|a_n|}{n^{\kappa}}
\end{align*}
we have
\begin{align}\label{finalceq}
    &\left| \sum_{n\leq x} a_n v(T\log(x/n)) - \frac{1}{2\pi i T}\int_{T}^{\xi T}\int_{\kappa-it}^{\kappa+it}F(s)\frac{x^s\mathrm{d}s}{s}w(t/T)\mathrm{d}t \right| x^{-\kappa}\notag\\
    &\qquad\leq(k+1)\frac{2 N_{k,\xi}(w)}{T^{k+1} }\int_{\theta/T}^\infty\sum_{|\log(x/n)|\leq u}\frac{|a_n|}{n^{\kappa}}\frac{\mathrm{d}u}{u^{k+2}}\notag\\
    &\qquad\qquad\qquad\qquad\qquad\qquad\qquad+\left(c_{\theta}-\frac{2N_{k,\xi}(w)}{\theta^{k+1}}\right)\sum_{|\log(x/n)|<\theta/ T}\frac{|a_n|}{n^{\kappa}}.
\end{align}

Note that $\left|v(y)e^{-y\kappa'}-\frac{1}{\pi} \arctan(1/\kappa') \right|<1$, so taking $\theta=\theta'$ we have
\begin{equation*}
    c_{\theta'}<\frac{2N_{k,\xi}(w)}{(\theta')^{k+1}}
\end{equation*}
and the second term in \eqref{finalceq} is negative. This proves the theorem.
\end{proof}

\section{A modified Perron formula}\label{sectCor}

As Theorem \ref{mainramthm} involves an integral from $T$ to $\xi T$, we can set $\xi=2$ and use a continuity argument to deduce a strong bound for some $T^*\in [T,2T]$. First, we need to choose a weight function $w(t)$. Roughly following \cite[Sect.~5]{Ramare_16_Perron}, we define
\begin{equation}\label{wdef}
    w(u)=6(u-1)(2-u).
\end{equation}
for $u\in[1,2]$. Note that
\begin{equation}\label{eulerbetaeq}
    \int_1^2 w(u)\mathrm{d}u=1
\end{equation}
and $w(u)$ is a $(1,2)$-admissible function. With this choice of $w(u)$, we compute the functions $\mathscr{L}_2(w)$, $N_{1,2}(w)$ and $\theta'_{1,2}(w)$ defined in Section \ref{sectRam}. 
\begin{lemma}
    Let $w(u)$ be as defined in \eqref{wdef}, and $\mathscr{L}_2(w)$, $N_{1,2}(w)$ and $\theta'_{1,2}(w)$ be as defined in \eqref{Ndef}, \eqref{Ldef} and the ensuing text. Then,
    \begin{equation*}
        \mathscr{L}_{2}(w)=\frac{3}{2\pi},
    \end{equation*}
    \begin{equation}\label{nest}
        N_{1,2}(w)=\frac{9-18\left(\log\frac{4}{3}-\log\frac{3}{2}\right)-12\log 2}{2\pi}\leq 0.4461.
    \end{equation}
    and
    \begin{equation*}
        0.8029\leq\theta'_{1,2}(w)\leq 0.8030.
    \end{equation*}
\end{lemma}
\begin{proof}
    To begin with, $\mathscr{L}_2(w)$ is computed as an elementary integral
    \begin{equation}\label{Lint}
        \mathscr{L}_2(w)=\frac{6}{\pi}\int_1^2 u(u-1)(2-u)\mathrm{d}u=\frac{3}{2\pi}.
    \end{equation}
    To compute $N_{1,2}(w)$, we note that $w(1)=w(2)=0$ so that 
    \begin{equation}\label{Nint}
        N_{1,2}(w)=\frac{1}{2\pi}\left(\int_{1}^2\frac{|w(u)|}{u}+\int_{1}^2\frac{|w'(u)|}{u}\right),
    \end{equation}
    which evaluates to \eqref{nest}. Finally, to compute $\theta'_{1,2}(w)$ we use the \textbf{NSolve} function in \emph{Mathematica} with the functions $f_1(y)$ and $f_2(y)$ defined in \eqref{f1f2def}. Full computational details, including the evaluation of \eqref{Lint} and \eqref{Nint}, are included in the supplementary code for this paper \eqref{codeeq}.
\end{proof}
We now substitute this choice of $w(u)$ into Theorem \ref{mainramthm}, giving the following result.

\begin{corollary}\label{newCor-MTPerron}
    Let $w$ and $\theta'=\theta'_{1,2}(w)$ be as above. Let $x\geq 1$ and $T>1$. Let $F(s)=\sum_{n\geq 1}a_n n^{-s}$ be a Dirichlet series and $\kappa>0$ be a real parameter larger than $\kappa_a$ --- the abscissa of absolute convergence of $F$. For any $\lambda\geq\theta'/T$, there exists a $T^*\in[T,2T]$ such that
    \begin{equation}\label{rameq}
        \sum_{n\leq x}a_n=\frac{1}{2\pi i}\int_{\kappa-iT^*}^{\kappa+iT^*}F(s)\frac{x^s}{s}\mathrm{d}s+O^*(\mathfrak{R}),
    \end{equation}
    where $\mathfrak{R}=\mathfrak{R}(x,T,\kappa)$ satisfies
    \begin{equation}\label{req1}
       \mathfrak{R}\leq \frac{1.785}{T^2} \left(\frac{x^{\kappa}}{2\lambda^{2}}\sum_{n\geq 1}\frac{|a_n|}{n^{\kappa}} + e^{\kappa\lambda}\int_{\theta'/T}^{\lambda} \sum_{|\log(x/n)| \leq u} \frac{|a_n| \mathrm{d}u}{u^3}\right).
    \end{equation}
\end{corollary}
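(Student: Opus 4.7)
The plan is to specialize Theorem~\ref{mainramthm} to $k=1$, $\xi=2$, and the weight $w$ defined above, and then convert the resulting averaged double-integral identity into the single-$T^*$ truncated Perron formula claimed. The preliminaries are routine: $w(u)=6(u-1)(2-u)$ is $(1,2)$-admissible (condition~(3) is vacuous when $k=1$, (2) is the beta-function identity~\eqref{eulerbetaeq}, and (1) is clear since $w$ is polynomial on $[1,2]$), and direct integration yields $\mathscr{L}_2(w)=3/(2\pi)$, the bound~\eqref{nbound} (whence $4N_{1,2}(w)\le 1.785$), and the stated bracketing of $\theta'_{1,2}(w)$.

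Substituting these choices into Theorem~\ref{mainramthm} produces
\begin{equation*}
\sum_{n\le x} a_n = \frac{1}{2\pi i T}\int_T^{2T} w(t/T)\int_{\kappa-it}^{\kappa+it} F(s)\frac{x^s}{s}\,ds\,dt + O^*(E),
\end{equation*}
where $E=(4N_{1,2}(w)/T^2)\int_{\theta'/T}^{\infty}\sum_{|\log(x/n)|\le u}|a_n|x^\kappa/(n^\kappa u^3)\,du$. To reach the explicit bound~\eqref{req1}, I would split this $u$-integral at $u=\lambda$: on $[\lambda,\infty)$, enlarging the inner sum to all $n\ge 1$ and using $\int_\lambda^\infty u^{-3}\,du=1/(2\lambda^2)$ yields the first term $x^\kappa/(2\lambda^2)\sum_n|a_n|/n^\kappa$, while on $[\theta'/T,\lambda]$ the constraint $|\log(x/n)|\le u\le\lambda$ forces $(x/n)^\kappa\le e^{\kappa\lambda}$, which can be pulled outside the integral to produce the second term. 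Since the upper bounds $4N_{1,2}(w)\le 1.785$ and these integral manipulations are all monotone, the resulting constant agrees with the one in~\eqref{req1}.

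The main obstacle is the final passage from the averaged identity to a single $T^*\in[T,2T]$. Since $\kappa>\kappa_a$ the integrand is holomorphic along $\Re s=\kappa$, so $t\mapsto I(t):=(2\pi i)^{-1}\int_{\kappa-it}^{\kappa+it}F(s)x^s/s\,ds$ is continuous on $[T,2T]$ and $w(t/T)/T$ is a nonnegative probability density there. Because $\sum_{n\le x}a_n-I(t)$ is complex-valued, the naive mean value theorem for integrals does not directly supply a single $T^*$ controlling both real and imaginary components. The route I would take is to work with the real-valued nonnegative function $t\mapsto|\sum_{n\le x}a_n-I(t)|$ and bound its weighted average over $[T,2T]$ by $\mathfrak{R}$ --- this can be done by re-deriving the estimate inside the proof of Theorem~\ref{mainramthm} while keeping the absolute value inside the $t$-integral, which is legitimate because the $n$-by-$n$ triangle inequality used there commutes with the reordering of sum and integral. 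The ordinary mean value theorem for integrals then locates some $T^*\in[T,2T]$ with $|\sum_{n\le x}a_n-I(T^*)|\le\mathfrak{R}$, which is exactly the required pointwise identity.
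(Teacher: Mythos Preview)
Your overall route matches the paper's: specialize Theorem~\ref{mainramthm} to $k=1$, $\xi=2$, rewrite the averaged identity via $\int_T^{2T} w(t/T)/T\,dt=1$, extract a single $T^*$ by a continuity/mean-value argument, and then split the $u$-integral at $\lambda$ to reach~\eqref{req1}. The splitting and the numerics are carried out exactly as in the paper.

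The gap is in your proposed repair of the mean-value step. You are right that for complex-valued $G(x,t)=\sum_{n\le x}a_n-I(t)$ the bound $\bigl|\int_T^{2T} G\,w\bigr| \le \mathfrak{R}'$ does not by itself produce a $T^*$ with $|G(x,T^*)|\le\mathfrak{R}'$; but your fix --- re-running the proof of Theorem~\ref{mainramthm} with the absolute value kept \emph{inside} the $t$-integral --- does not recover the same bound. The decisive $|y|^{-(k+1)}$ decay in Lemma~\ref{averagelem} (here $|y|^{-2}$) comes from integrating by parts in $\tau$ over $[1,\xi]$, which exploits the oscillation in $\int_1^\xi e^{i\tau y} w(\tau)/(u+i\tau)\,d\tau$ together with the boundary behaviour of $w$. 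That integration by parts is only available when the $\tau$-integral sits \emph{outside} the absolute value. If instead you bound $\bigl|v(y)-\frac{1}{2\pi i}\int_{\kappa'-i\tau}^{\kappa'+i\tau}e^{sy}s^{-1}\,ds\bigr|$ pointwise in $\tau$ and then average, each fixed $\tau$ gives only the classical $O(e^{\kappa' y}/(\tau|y|))$ error; averaging over $\tau\in[1,2]$ still leaves $O(e^{\kappa' y}/|y|)$, so you lose a full factor of $|y|$ --- equivalently a factor of $T$ --- in the final estimate. The resulting bound would be $O(1/T)$ rather than the $O(1/T^2)$ appearing in~\eqref{req1}, collapsing the corollary back to the strength of the ordinary (unaveraged) truncated Perron formula. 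Your justification that ``the $n$-by-$n$ triangle inequality commutes with the $t$-integral'' is true but beside the point: the savings occur \emph{after} the $n$-triangle inequality, inside Lemma~\ref{averagelem}, and they do not survive moving the modulus inward.

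For comparison, the paper's own proof is terse at exactly this point: from $\int_T^{2T} G(x,t)\,w(t/T)/T\,dt=O^*(\mathfrak{R}')$, continuity of $G$ in $t$, and $\int w=1$, it simply asserts the existence of $T^*$ with $|G(x,T^*)|\le\mathfrak{R}'$, without explicitly addressing the complex-valuedness issue you flagged. So you have correctly located the delicate step; what is still missing is an argument that extracts $T^*$ \emph{without} sacrificing the cancellation that makes the averaged bound sharper than the pointwise one.
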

\begin{proof}

Let $\xi=2$ and $k=1$ in Theorem \ref{mainramthm}, and set
\begin{equation}\label{req2}
    \mathfrak{R}'=\frac{4 N_{1,2}(w)}{T^2 }\int_{\theta'/T}^\infty\sum_{|\log(x/n)|\leq u}\frac{|a_n|}{n^{\kappa}}\frac{x^{\kappa}\mathrm{d}u}{u^{3}}
\end{equation}
to be the error term in \eqref{lemmaeq}. A change of variables $u=t/T$ in \eqref{eulerbetaeq} gives 
\begin{equation}\label{eulerbeta2}
    \int_{T}^{2T}\frac{w(t/T)}{T}\mathrm{d}t=1.
\end{equation}
Thus, \eqref{lemmaeq} can be rewritten as
\begin{equation}\label{pirates}
    \int_T^{2 T}\left(\sum_{n\leq x}a_n-\frac{1}{2 \pi i}\int_{\kappa-it}^{\kappa+it}F(s)\frac{x^s}{s}\mathrm{d}s\right)\frac{w(t/T)}{T}\mathrm{d}t=O^*(\mathfrak{R}'),
\end{equation}
where $\mathfrak{R}'$ is defined in \eqref{req2}.
Let $\epsilon=1-1/2\pi$ and 
\begin{equation}
    G(x,t) = \sum_{n\leq x}a_n-\frac{1}{2 \pi i}\int_{\kappa-it}^{\kappa+it} F(s)\frac{x^s}{s}\mathrm{d}s.
\end{equation}

Given (\ref{eulerbeta2}) and that $G(x,t)$ is continuous in $t$, (\ref{pirates}) implies there exists a $T^*\in[T,2T]$ with
\begin{equation}\label{sequenceeq}
    \left| G(x,T^*) \right| \leq  \mathfrak{R}'.
\end{equation}

In particular, if $G(x,t)>\mathfrak{R}'$ for all $t\in[T,2T]$ then
\begin{equation*}
    \int_T^{2T}G(x,t)\frac{w(t/T)}{T}\mathrm{d}t>\mathfrak{R}'\int_T^{2T}\frac{w(t/T)}{T}\mathrm{d}t=\mathfrak{R}',
\end{equation*}
which contradicts \eqref{pirates}.

Next we note that
\begin{equation*}
    \int_{\lambda}^\infty\sum_{|\log(x/n)|\leq u}\frac{|a_n|}{n^{\kappa}}\frac{x^{\kappa}\mathrm{d}u}{u^{3}}\leq x^{\kappa}\sum_{n\geq 1}\frac{|a_n|}{n^{\kappa}}\int_{\lambda}^\infty\frac{1}{u^3}\mathrm{d}u=\frac{x^{\kappa}}{2\lambda^2}\sum_{n\geq 1}\frac{|a_n|}{n^{\kappa}}.
\end{equation*}
Hence, splitting the integral in \eqref{req2} at $u=\lambda$ and using $N_{1,2}(w)\leq 0.4461$ gives
    \begin{align}\label{rineqs}
        |G(x,T^*)|\leq\mathfrak{R}' &\leq \frac{4\cdot 0.4461}{T^2} \left(\frac{x^{\kappa}}{2\lambda^2}\sum_{n\geq 1}\frac{|a_n|}{n^{\kappa}} + \int_{\theta'/T}^{\lambda}\sum_{|\log(x/n)|\leq u}|a_n|\left(\frac{x}{n}\right)^{\kappa}\frac{1}{u^{3}}\mathrm{d}u\right)\notag\\
        &\leq \frac{1.785}{T^2} \left(\frac{x^{\kappa}}{2\lambda^{2}}\sum_{n\geq 1}\frac{|a_n|}{n^{\kappa}} + e^{\kappa\lambda}\int_{\theta'/T}^{\lambda} \sum_{|\log(x/n)| \leq u} \frac{|a_n|}{u^3}\mathrm{d}u\right)=\mathfrak{R},
    \end{align}
    as desired.
\end{proof}

We also have the following variant of Corollary~\ref{newCor-MTPerron} for sums $\sum_{x<n\leq x+h}a_n$.
\begin{corollary}\label{intervalperron}
    Keep the notation of Corollary~\ref{newCor-MTPerron}. For any nonnegative function $h=h(x)$, we have
    \begin{align}\label{aninterval}
        \sum_{x<n\leq x+h}a_n=\frac{1}{2\pi i}\int_{\kappa-iT^*}^{\kappa+iT^*}F(s)\frac{(x+h)^s-x^s}{s}\mathrm{d}s + O^*(\mathfrak{R}(x+h,T,\kappa)+\mathfrak{R}(x,T,\kappa)).
    \end{align}
\end{corollary}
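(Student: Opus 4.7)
The plan is to avoid applying Corollary~\ref{newCor-MTPerron} twice independently (which would produce two unrelated values of $T^*$) and instead to feed the difference of the two partial sums into the same continuity argument used in the proof of Corollary~\ref{newCor-MTPerron}. Let
\begin{equation*}
    H(t) := \sum_{x<n\leq x+h}a_n - \frac{1}{2\pi i}\int_{\kappa-it}^{\kappa+it}F(s)\frac{(x+h)^s-x^s}{s}\,\mathrm{d}s,
\end{equation*}
so that in the notation of the proof of Corollary~\ref{newCor-MTPerron} we have $H(t)=G(x+h,t)-G(x,t)$. Since each $G$ is continuous in $t$, so is $H$.

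Next, I would invoke \eqref{pirates} once at $x+h$ and once at $x$, subtract, and apply the triangle inequality to obtain
\begin{equation*}
    \int_T^{2T} H(t)\,\frac{w(t/T)}{T}\,\mathrm{d}t = O^*\bigl(\mathfrak{R}'(x+h,T,\kappa)+\mathfrak{R}'(x,T,\kappa)\bigr),
\end{equation*}
where $\mathfrak{R}'(y,T,\kappa)$ is the quantity in \eqref{req2} with $x$ replaced by $y$. Since $\int_T^{2T} w(t/T)/T\,\mathrm{d}t=1$ by \eqref{eulerbeta2} and $H$ is continuous, the standard mean-value argument (exactly as in the step leading to \eqref{sequenceeq}) yields a single $T^*\in[T,2T]$ with
\begin{equation*}
    |H(T^*)|\leq \mathfrak{R}'(x+h,T,\kappa)+\mathfrak{R}'(x,T,\kappa).
\end{equation*}

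Finally, I would bound each $\mathfrak{R}'$ by the corresponding $\mathfrak{R}$ exactly as in \eqref{rineqs}, giving \eqref{aninterval}. The only real point to watch is the one just highlighted: a naive double application of Corollary~\ref{newCor-MTPerron} would give two different $T^*$ values, and the resolution is to run the continuity argument on the single function $H(t)$ rather than on $G(x,t)$ and $G(x+h,t)$ separately. Everything else is a routine repetition of the estimates already established for Corollary~\ref{newCor-MTPerron}, so I do not anticipate any further obstacles.
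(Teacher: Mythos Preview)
Your proposal is correct and matches the paper's approach exactly: the paper's proof simply says to ``repeat the argument in the proof of Corollary~\ref{newCor-MTPerron}'' for the difference $\sum_{n\leq x+h}a_n-\sum_{n\leq x}a_n$, and it then makes precisely the same remark you do, namely that applying Corollary~\ref{newCor-MTPerron} twice would produce two unrelated values of $T^*$, so one must run the continuity argument on the single function $H(t)=G(x+h,t)-G(x,t)$.
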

\begin{proof}
    We repeat the argument in the proof of Corollary~\ref{newCor-MTPerron} for
    \begin{equation*}
        \sum_{x<n\leq x+h}a_n=\sum_{n\leq x+h}a_n-\sum_{n\leq x}a_n.
    \end{equation*}
    In particular, by Theorem       \ref{mainramthm} we have (c.f. \eqref{pirates})
    \begin{align}\label{pirates2}
        &\int_T^{2 T}\left(\sum_{x<n\leq x+h}a_n-\frac{1}{2 \pi i}\int_{\kappa-it}^{\kappa+it}F(s)\frac{(x+h)^s-x^s}{s}\mathrm{d}s\right)\frac{w(t/T)}{T}\mathrm{d}t\notag\\
        &\qquad\qquad\qquad\qquad\qquad\qquad\qquad\qquad\qquad\qquad=O^*(\mathfrak{R}'(x+h)+\mathfrak{R}'(x)),
    \end{align}
    where $\mathfrak{R}'(x)=\mathfrak{R'}$ is as defined in \eqref{req2}. Then, as in the proof of Corollary \ref{newCor-MTPerron}, we can set
    \begin{equation*}
        G(x,t)=\sum_{x<n\leq x+h}a_n-\frac{1}{2 \pi i}\int_{\kappa-it}^{\kappa+it}F(s)\frac{(x+h)^s-x^s}{s}\mathrm{d}s
    \end{equation*}
    and by \eqref{pirates2} there exists $T^*\in[T,2T]$ such that 
    \begin{equation*}
        |G(x,T^*)|\leq \mathfrak{R}'(x+h)+\mathfrak{R}'(x)\leq \mathfrak{R}(x+h,T,\kappa)+\mathfrak{R}(x,T,\kappa),
    \end{equation*}
    as required.
\end{proof}

Note that simply applying Corollary~\ref{newCor-MTPerron} separately for 
\begin{equation*}
    \sum_{n\leq x+h}a_n\quad\text{and}\quad \sum_{n\leq x}a_n
\end{equation*}
is not sufficient to prove Corollary~\ref{intervalperron}. This is because we require $T^*\in[T,2T]$ to be the same value in both the integrals
\begin{equation*}
    \int_{\kappa-iT^*}^{\kappa+iT^*}F(s)\frac{(x+h)^s}{s}\mathrm{d}s\qquad\text{and}\qquad\int_{\kappa-iT^*}^{\kappa+iT^*}F(s)\frac{(-x^s)}{s}\mathrm{d}s.
\end{equation*}

\section{Proof of theorems \ref{mainthm} and \ref{mainthm-intervals}}\label{sectMain}

To prove Theorem \ref{mainthm} we use Corollary \ref{newCor-MTPerron} for $a_n=\Lambda(n)$ and $\kappa=1+1/\log x$. Namely, for $x\geq x_M\geq e^{40}$ and $\max\{51,\log^2 x\}<T<\frac{x^{\alpha}-2}{4}$ with any $\alpha\in(0,1/2]$ we have that for some $T^* \in[T,2T]$
\begin{align}\label{perronbound1}
        &\psi(x) = \frac{1}{2\pi i} \int_{\kappa-iT^*}^{\kappa+iT^*} \left(-\frac{\zeta'}{\zeta}(s)\right) \frac{x^s}{s} \mathrm{d}s \notag\\
        &\qquad\qquad\qquad +O^{*}\left(\frac{1.785}{T^2} \left(\frac{x^{\kappa}}{2\lambda^{2}}\sum_{n\geq 1}\frac{\Lambda(n)}{n^{\kappa}} + e^{\kappa\lambda}\int_{\theta'/T}^{\lambda} \sum_{|\log(x/n)| \leq u} \frac{\Lambda(n)}{u^3}\right)\mathrm{d}u\right),
\end{align}
where $\lambda$ is a free parameter and $\theta'$ is as defined in Corollary \ref{newCor-MTPerron}. 

For the first term, Theorem \ref{wolkeprop} in Appendix \ref{appk} implies that for any $\omega\in[0,1]$ there exists a constant $K$ corresponding to $x_M$ (given as $x_K$ in Table \ref{ktable}) such that
\begin{equation}
    \frac{1}{2\pi i}\int_{\kappa-iT^*}^{\kappa+iT^*}\left(-\frac{\zeta'}{\zeta}(s)\right)\frac{x^s}{s}\mathrm{d}s=x-\sum_{|\gamma|\leq T^*}\frac{x^{\rho}}{\rho}+O^*\left(K\frac{x}{T}(\log x)^{1-\omega}\right).
\end{equation}
Next, we use that $x^{\kappa}=ex$ and (see e.g.\ \cite{Delange_87})
\begin{equation*}
    \sum_{n\geq 1}\frac{\Lambda(n)}{n^{\kappa}}=-\frac{\zeta'}{\zeta}(\kappa)<\frac{1}{1-\kappa}=\log x
\end{equation*}
to bound the first sum in \eqref{perronbound1} by
\begin{equation}\label{firstsumbound}
    1.785\cdot\frac{x^\kappa}{2\lambda^2T^2} \sum_{n\geq 1} \frac{\Lambda(n)}{n^\kappa}<1.785\cdot\frac{ex\log x}{2\lambda^2 T^2}.
\end{equation}
For the second term in \eqref{perronbound1}, the condition $|\log(x/n)|\leq u$ is equivalent to $xe^{-u}\leq n\leq xe^u$. For all $u\in[0,\lambda]$ we have $e^{-u}\geq 1-u$ and $e^u\leq 1+(c_0-1)u$ with $$c_0 = c_0(\lambda) = \frac{e^\lambda-1}{\lambda}+1.$$ Hence
\begin{align}\label{perronbound2}
    \frac{1.785 e^{\kappa\lambda}}{T^2}\int_{\theta'/T}^{\lambda} \sum_{|\log(x/n)| \leq u}  \frac{\Lambda(n)}{u^3}\mathrm{d}u&\leq \frac{1.785 e^{\kappa\lambda}}{T^2}\int^{\lambda}_{\theta'/T}\frac{1}{u^3}\sum_{I(x,u)}\Lambda(n)\mathrm{d}u,
\end{align}
where
\begin{equation*}
    I(x,u)=\left\{n\geq 1\::\:x-ux\leq n\leq x + (c_0-1)ux \right\}.
\end{equation*}
Let $x_-=\max\{x-ux,0\}$ and $x_+ = x+(c_0-1)ux$ for $\theta'/T\leq u\leq \lambda$. Defining 
\begin{equation*}
    \theta(x)=\sum_{p\leq x}\log p,
\end{equation*}
we have by an explicit form of the Brun--Titchmarsh theorem \cite[Thm. 2]{montgomery1973large},
\begin{align*}
    \theta(x_+)-\theta(x_-)&\leq \frac{2 \log x_+}{\log(x_+-x_-)}(x_+-x_-) \\
    &\leq \frac{2 \log( x+(c_0-1)ux)}{\log(c_0 ux)}c_0ux \\
    &\leq c_0 ux\cdot\mathcal{E}_1(x,T),
\end{align*}
where
\begin{equation*}
    \mathcal{E}_1(x,T)=\frac{2 \log( x+(c_0-1)\lambda x)}{\log(c_0 \theta'x/T)}.
\end{equation*}
To obtain the corresponding inequality for $\psi$, we use \cite[Cor. 5.1]{BKLNW_21}, which states that for all $x\geq e^{40}$,
\begin{equation*}
    0<\psi(x)-\theta(x)<a_1x^{1/2}+a_2x^{1/3},
\end{equation*}
with $a_1=1+1.93378\cdot 10^{-8}$ and $a_2=1.0432$. Thus,
\begin{align*}
    \sum_{n\in I(x,u)}\Lambda(n)&\leq \psi(x_+)-\psi(x_-) +\log x\notag\\
    &<c_0ux\cdot \mathcal{E}_1(x,T)+a_1\sqrt{x_+}+a_2\sqrt[3]{x_+}+\log x\\
    &\leq c_0ux\cdot\mathcal{E}_1(x,T)+\mathcal{E}_2(x),
\end{align*}
where
\begin{equation*}
    \mathcal{E}_2(x) = a_1(x+(c_0-1)\lambda x)^{1/2} + a_2(x+(c_0-1)\lambda x)^{1/3} + \log x.
\end{equation*}
Substituting this into \eqref{perronbound2}, we obtain
\begin{align}\label{secondsumbound}
    \frac{1.785 e^{\kappa\lambda}}{T^2}\int^{\lambda}_{\theta'/T}\frac{1}{u^3}\sum_{I(x,u)}\Lambda(n)\mathrm{d}u &\leq \frac{1.785 e^{\kappa\lambda}}{T^2}\int_{\theta'/T}^{\lambda}\left(\frac{c_0 x\cdot \mathcal{E}_1(x,T)}{u^2}+\frac{\mathcal{E}_2(x)}{u^3}\right)\mathrm{d}u\notag\\
    &\leq 1.785 e^{\kappa\lambda}\left(\frac{c_0 x\cdot \mathcal{E}_1(x,T)}{\theta' T}+\frac{\mathcal{E}_2(x)}{2(\theta')^2}\right).
\end{align}
    
Hence, for all $x\geq x_M$ and some $T^* \in[T,2T]$ we have
\begin{align}\label{psi-estimate}
        &\psi(x) = x-\sum_{|\gamma|\leq T^*}\frac{x^{\rho}}{\rho}+O^*\left(K\frac{x}{T}(\log x)^{1-\omega} + 1.785\cdot\frac{ex\log x}{2\lambda^2 T^2} \right) \notag\\
        &\qquad\qquad\qquad +O^{*}\left( 1.785 e^{\kappa\lambda}\left(\frac{c_0 x\cdot \mathcal{E}_1(x,T)}{\theta' T}+\frac{\mathcal{E}_2(x)}{2(\theta')^2}\right) \right).
\end{align}
Finally, to obtain Theorem \ref{mainthm} and the values in Table \ref{maintable}, we optimise over $\lambda$ for specific $x_M$ and $\omega$, using $K$ from \cite[Table 3]{CH_DJ_Perron1}. Further computational details are given in \eqref{codeeq}.

Theorem \ref{mainthm-intervals} is proven in the same manner, using Corollary \ref{intervalperron} in place of Corollary \ref{newCor-MTPerron}.

\section{Application: primes between consecutive powers}\label{sectPowers}

In this section we prove Theorem \ref{powerthm}, which is an explicit result on primes between consecutive powers that improves \cite[Thm.~1]{cully2023primes} and \cite[Thm.~1.3]{CH_DJ_Perron1}. We use a similar method to that in \cite{cully2023primes} and \cite{CH_DJ_Perron1}, but use our new result for $\psi(x+h)-\psi(x)$ in Theorem \ref{mainthm-intervals}. We also make use of some recent improvements to zero-free regions for the Riemann zeta-function (listed in Appendix \ref{appzf}). See \eqref{codeeq} for further details about the computations used in this section.

To find primes in the interval $(n^m, (n+1)^m)$, we redefine $n = x^\frac{1}{m}$ and consider the slightly smaller interval $(x,x+h]$ with $h=mx^{1-1/m}$. Let $x_M$, $\alpha$, $\omega$, $\lambda$ and $M$ be as in Theorem \ref{mainthm-intervals} so that for $x\geq x_M$ and $\max\{51,\log^2x\}<T<(x^{\alpha}-2)/4$, there exists a $T^*\in[T,2T]$ for which
\begin{equation} \label{psipsi} 
    \psi(x+h) - \psi(x) \geq \ h - \left| \sum_{|\gamma|\leq T^*}  \frac{(x+h)^\rho-x^\rho}{\rho} \right| - M \frac{G(x,h)}{T},
\end{equation}
where $G(x,h) = (x+h)(\log(x+h))^{1-\omega} + x(\log x)^{1-\omega}$. If the right-hand side of \eqref{psipsi} is positive for some range of $x$, there must be prime powers in $(x,x+h]$ for those $x$. The number of prime powers $p^k$ with $k\geq 2$ can then be suitably bounded so that we are only left with primes.

To estimate \eqref{psipsi}, we first bound the sum over zeta zeros. The following bounds are given in the proof of \cite[Thm.~1]{cully2023primes}, where more detail is provided. We have
\begin{equation*}
    \left| \frac{(x+h)^\rho-x^\rho}{\rho} \right| \leq \int_x^{x+h} u^{\beta -1} \mathrm{d}u \leq hx^{\beta-1},
\end{equation*}
where $\beta\in(0,1)$ is the real part of $\rho$. Hence,
\begin{equation*}
    \left| \sum_{|\gamma|\leq T^*} \frac{(x+h)^\rho-x^\rho}{\rho} \right| \leq h \sum_{|\gamma|\leq 2T} x^{\beta-1}.
\end{equation*}

This sum can then be estimated by writing
\begin{align*}
    \sum_{|\gamma|\leq 2T} (x^{\beta-1}-x^{-1}) = \sum_{|\gamma| \leq 2T} \int_0^\beta x^{\sigma-1} (\log x) \mathrm{d}\sigma = \int_0^1 \sum_{\substack{|\gamma|\leq 2T \\ \beta \geq \sigma}} x^{\sigma-1}(\log x) \mathrm{d}\sigma,
\end{align*}
which re-arranges to 
\begin{equation}\label{xbetaeq}
    \sum_{|\gamma|\leq 2T} x^{\beta-1} = 2\int_0^1 x^{\sigma-1}\log x  \sum_{\substack{0<\gamma\leq 2T \\ \beta > \sigma}} 1 \ \mathrm{d}\sigma + 2x^{-1} \sum_{0<\gamma\leq 2T} 1.
\end{equation}
This expression allows us to use bounds for $N(T)$ and $N(\sigma, T)$, which respectively denote the zero-counting function and zero-density function of $\zeta(s)$. We can also use zero-free regions for $\zeta(s)$ to curtail the upper end-point of the integral. 

The most recent estimate for $N(T)$ is given by \cite[Thm.~1.1,(1.4)]{H_S_W_22}; for $N(\sigma,T)$ the most recent is in \cite[Thm.~1.1,(1.7)]{K_L_N_2018}, with updated constants in \cite[Table 3]{DJ_Y_2023}. For the zero-free region, we use a combination of the current best explicit results: see Appendix \ref{appzf}. We denote $\sigma\geq 1-\nu(T)$ as the region in which $\zeta(s)$ has no zeros, where $\nu(T)$ is defined in \eqref{mainnueq}.

The same working as in the proof of \cite[Thm.~1]{cully2023primes} follows, beginning with
\begin{align} \label{penguin}
    \sum_{|\gamma|\leq 2T} x^{\beta-1} = \frac{2N(2T)}{x}  +  \frac{2\log x}{x}\left( \int_0^{3/5} N(2T) x^{\sigma} \mathrm{d}\sigma  +  \int_{3/5}^{1-\nu(T)} N(\sigma,2T) x^{\sigma} \mathrm{d}\sigma \right).
\end{align}
Choosing to split the integral at $3/5$ is a somewhat arbitrary choice: any value in $[1/2, 1-\nu(T)]$ would work. However, after some experimentation we found the choice had a negligible impact on the final results. Using the previously mentioned estimates for $N(T)$ and $N(\sigma,T)$ gives
\begin{align} \label{N_int}
&\int_0^{3/5} N(2T) x^{\sigma} \mathrm{d}\sigma  +  \int_{3/5}^{1-\nu(2T)} N(\sigma,2T) x^{\sigma} \mathrm{d}\sigma \leq \frac{2T\log(2T)}{2\pi}  \left( \frac{x^{3/5}-1}{\log x} \right)  \\
& \quad  + C_1 (2T)^{8/3}\log^5(2T)  \left( \frac{ W^{1-\nu(2T)}-W^{\sigma_1}}{\log W}\right)  + C_2 \log^2(2T) \left( \frac{x^{1-\nu(2T)}-x^{3/5}}{\log x} \right),   \nonumber 
\end{align}
where $C_1=C_1(1)$ and $C_2=C_2(3/5)$ are defined in \cite[Thm.~1.1]{K_L_N_2018}, and 
\begin{equation*}
    W=x ((2T)^{\frac{4}{3}} \log (2T))^{-2}.
\end{equation*} 
Note that the specific values of $C_1$ and $C_2$ are due to $C_1(\sigma)$ being an increasing function of $\sigma$ and $C_2(\sigma)$ being decreasing: see equations (4.72) and (4.73) in \cite{K_L_N_2018}.

Next, we choose $T=x^\mu/2$ for any $\mu\in(0,1)$ that satisfies $\max\{51,\log^2 x\}\leq T\leq (x^\alpha -2)/4$. The factor of $1/2$ in the choice of $T$ is to account for the sum in \eqref{xbetaeq} being over $|\gamma|\leq 2T$. We have, with $W=x^{1-\frac{8\mu}{3}}/(\mu\log x)^2$,
\begin{align} \label{sum_fns}
\sum_{|\gamma|\leq 2T} x^{\beta-1} & < F(x) = \frac{\mu\log x}{\pi x^{2/5-\mu}} +  \frac{2C_1\mu^3 \left(W^{-\nu(x^\mu)} - W^{-2/5} \right)\log^4 x}{\log W}  +   \frac{2C_2\mu^2\log^2 x}{x^{\nu(x^\mu)}}.
\end{align}
Using this bound in \eqref{psipsi}, we reach
\begin{align*}
    \psi(x+h) - \psi(x) \geq h - h F(x) - 2M \frac{G(x,h)}{x^\mu}.
\end{align*}
This can readily be converted to an inequality for the prime counting function $\theta(x)=\sum_{p\leq x}\log p$. In particular, by \cite[Corollary 5.1]{BKLNW_21} and the fact that ${\psi(x+h)>\theta(x+h)}$, we have
\begin{equation*}
    \theta(x+h)-\theta(x)\geq h - h F(x) - 2M \frac{G(x,h)}{x^\mu}-E(x),
\end{equation*}
where $E(x)=1.001(x^{1/2}+x^{1/3})$ provided $\log x\geq 100$. Therefore, there are primes in $(x,x+h]$ for all $x\geq x_M$ satisfying 
\begin{equation} \label{condition_psi} 
    1 - F(x) - 2M \frac{G(x,h)}{x^\mu h} -\frac{E(x)}{h} > 0.
\end{equation}
This condition depends on: the choice of power $m$ in $h=mx^{1-1/m}$, the smallest value in the range $x_M$, and the parameters $\mu$, $\omega$, and $\alpha$.

It remains to find the smallest $m$ for which \eqref{condition_psi} holds for all $x\geq x_0>1$. This involves optimising over $\mu$, and is a similar process to that in \cite[Sect.~4,pg.~114]{cully2023primes}. For any $\mu > 1/m$, the left-hand side of \eqref{condition_psi} increases to 1 for sufficiently large $x$. The largest-order term is in $F(x)$, and is determined by the zero-free region. If \eqref{condition_psi} is true at the point where the zero-free region switches from using Lemma \ref{andrew_zf} to Lemma \ref{MTY-vk-region}, then \eqref{condition_psi} will hold for all $x$ beyond this. There may also be a range of $\mu$ for which this is true, so the best choice of $\mu$ will keep \eqref{condition_psi} positive for the widest range of $x$ below this zero-free region crossover point.

We follow the previous steps for $m=90$, as this appears to be the smallest $m$ we can achieve with the given constants. From Table \ref{maintable} we can use $M = 6.555$ over $\log x \geq 10^3$, and $M=5.602$ over $\log x \geq 4\cdot 10^3$, each with $\omega = 0.9$. Numerically optimising over $\mu$, we find that for $\mu = 0.0113$, \eqref{condition_psi} is true for $10^3\leq x\leq 4\cdot 10^3$, and with $\mu=0.0112$ \eqref{condition_psi} holds for $x\geq 4\cdot 10^3$.

There are a few remaining steps to reach Theorem \ref{powerthm}. Firstly, the values for $M$ come from using $\alpha = 1/85$ in Theorem~\ref{mainthm}, which restricts $T<(x^{1/85}-2)/4$. Given that we set $T=x^\mu/2$, this upper bound on $T$ implies that for a given $\mu$, the result will only be valid for sufficiently large $x$. For $\mu=0.0113$, this condition implies that Theorem \ref{mainthm} only holds for $\log x\geq 1492$. Fortunately, we can supplement this result with the interval estimates for primes in \cite{CH_L_corr}. The results in \cite{CH_L_corr} for $x\geq 4\cdot 10^{18}$ and $x\geq e^{600}$ verify that there are primes between consecutive $90^{\text{th}}$ powers for $\log(4\cdot 10^{18}) \leq \log x \leq 2767$. This is found by solving for when the interval estimates in \cite{CH_L_corr} sit within the consecutive powers interval. Lastly, the computations of \cite{O_H_P_14} verify the consecutive $90^{\text{th}}$ powers interval for the remaining $x\geq 1$. This proves Theorem \ref{powerthm}.

\section*{Acknowledgements}
Thanks to our supervisor, Tim Trudgian for his ideas and support when writing this paper. We also thank Ethan Lee for some useful comments, and Olivier Ramar\'e for his correspondence regarding his earlier paper.

\newpage
\appendix
\section{Current zero-free regions}\label{appzf}

We use a zero-free region composed of the following four results.

\begin{lemma}{\cite[Thm.~1.3]{mossinghoff2024explicit}}\label{MTYclassical}
    For $|t|\geq 2$ there are no zeros of $\zeta(\beta+it)$ in the region $\beta\geq 1-\nu_1(t)$, where
    \begin{equation*}
        \nu_1(t)=\frac{1}{5.558691\log |t|}.
    \end{equation*}
\end{lemma}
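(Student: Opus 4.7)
The statement is the classical-style explicit zero-free region quoted from \cite{mossinghoff2024explicit}, so the plan follows the Stechkin / de la Vall\'ee Poussin framework with modern polynomial-optimization refinements. The core ingredient is a non-negative trigonometric polynomial $P(\theta)=\sum_{k=0}^{K} a_k\cos(k\theta)\geq 0$ with $a_k\geq 0$ and $a_1>a_0$. Applied to $\Re(-\zeta'/\zeta)$ at the points $\sigma,\sigma+it,\sigma+2it,\ldots,\sigma+iKt$, this yields
\begin{equation*}
    0\leq -\sum_{k=0}^{K} a_k\,\Re\frac{\zeta'}{\zeta}(\sigma+ikt).
\end{equation*}
The classical choice $3+4\cos\theta+\cos 2\theta=2(1+\cos\theta)^2$ gives the textbook constant near $9.65$; the value $5.558691$ comes from using a much longer polynomial found by semidefinite programming, together with sharpened auxiliary estimates.

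First I would expand each summand via the Hadamard product
\begin{equation*}
    -\Re\frac{\zeta'}{\zeta}(s)=\tfrac{1}{2}\log\pi-\Re\frac{\Gamma'}{\Gamma}\!\left(\tfrac{s}{2}+1\right)+\Re\frac{1}{s-1}-\sum_{\rho}\Re\frac{1}{s-\rho}.
\end{equation*}
For $\sigma>1$ each term $\Re\,1/(s-\rho)$ is non-negative, so on the $k=1$ line one may drop every zero except the hypothetical $\rho=\beta+it$, whose contribution is $-a_1/(\sigma-\beta)$. The $k=0$ term is controlled by the pole at $s=1$ via $\Re\,1/(s-1)\leq 1/(\sigma-1)$; the remaining $k\geq 1$ contributions need sharp explicit bounds on $\log|\zeta|$ just to the right of the $1$-line (obtained by convexity between a trivial bound at $\sigma=1+\delta$ and a Phragm\'en--Lindel\"of-type bound further left, or by Korobov--Vinogradov-type mean-value input), and the digamma term is handled by an explicit Stirling estimate. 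Collecting everything and setting $\sigma=1+c/\log|t|$ produces an inequality of the shape
\begin{equation*}
    \frac{a_1}{\sigma-\beta}\leq \frac{a_0}{\sigma-1}+R_0\log|t|+O(1),
\end{equation*}
from which $1-\beta\geq 1/(R\log|t|)$ follows after rearranging.

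The main obstacle is purely numerical: driving $R$ down to $5.558691$ is a joint optimization over the polynomial coefficients $\{a_k\}$, the shift $c$, and the auxiliary parameters controlling the $\log|\zeta|$ bound. This is exactly the work done in \cite{mossinghoff2024explicit}; reproducing it would mean setting up the SDP for the polynomial inequality, inserting the sharpest available explicit estimates for $\log|\zeta(\sigma+it)|$ on vertical lines near $\Re s=1$, and optimizing. The restriction $|t|\geq 2$ is imposed so that $\log|t|$ comfortably dominates the $O(1)$ contributions absorbed on the way; zeros with $|t|<2$ are not claimed here and are excluded via the established numerical verification of the Riemann hypothesis up to much larger heights.
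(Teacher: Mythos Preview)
The paper does not prove this lemma at all: it is stated in Appendix~\ref{appzf} purely as a citation of \cite[Thm.~1.3]{mossinghoff2024explicit}, with no accompanying argument. So there is no ``paper's own proof'' to compare your proposal against.

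Your sketch is a reasonable high-level description of the Stechkin--de~la~Vall\'ee~Poussin method that underlies the result in \cite{mossinghoff2024explicit}, and you are right that the constant $5.558691$ is the outcome of a substantial numerical optimization over trigonometric polynomials and auxiliary parameters carried out in that reference. But in the context of the present paper, no proof is expected or required: the lemma is simply quoted and used as input for the zero-free region function $\nu(T)$ in \eqref{mainnueq}.
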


\begin{lemma}{\cite[Thm.~3]{Ford_2002}}\label{fordclassical}
    For $|t|\geq 3$ there are no zeros of $\zeta(\beta+it)$ in the region $\beta\geq 1-\nu_2(t)$, where
    \begin{equation*}
       \nu_2(t)=\frac{1}{R(|t|)\log |t|},
    \end{equation*}
    with
    \begin{equation*}
        R(t)=\frac{J(t)+0.685+0.155\log\log t}{\log t\left(0.04962-\frac{0.0196}{J(t)+1.15}\right)},
    \end{equation*}
    and
    \begin{equation*}
        J(t)=\frac{1}{6}\log t+\log\log t+\log(0.618).
    \end{equation*}
\end{lemma}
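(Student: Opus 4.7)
The plan is to follow the standard Vinogradov--Korobov strategy in the explicit form developed by Ford. The starting point is the non-negative trigonometric polynomial inequality
\[
    P(\theta) = \sum_{k=0}^{K} b_k \cos(k\theta) \geq 0,
\]
with $b_k\geq 0$ and $b_0, b_1 > 0$; the classical choice $3+4\cos\theta+\cos(2\theta)$ is the degree-two prototype, but the sharp Ford constants require a longer polynomial (Ford used a degree-4 or higher polynomial found by linear programming), and the precise values of $b_k$ determine the numerical factors appearing in $J(t)$ and $R(t)$.

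Applied to $\Re\log\zeta(s)$ this yields, for any $\sigma>1$ and real $t$,
\[
    -\sum_{k=0}^{K} b_k\,\Re\frac{\zeta'}{\zeta}(\sigma + ikt) \geq 0.
\]
Isolating the pole of $\zeta$ at $s=1$ gives $-\Re(\zeta'/\zeta)(\sigma)\leq 1/(\sigma-1)+O(1)$, while if $\rho=\beta+it$ is a zero, the partial-fraction expansion of $\zeta'/\zeta$ contributes a term $\geq 1/(\sigma-\beta)$ to $-\Re(\zeta'/\zeta)(\sigma+it)$. The remaining terms must be bounded \textit{above}: here one needs
\[
    \Re\frac{\zeta'}{\zeta}(\sigma+ikt) \ll \log|kt|,
\]
with \emph{explicit} constants sharper than the trivial ones. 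This is where the Vinogradov--Korobov mean value estimate enters: an explicit bound of the form $|\zeta(\sigma+iu)|\leq A\,|u|^{B(1-\sigma)^{3/2}}(\log|u|)^{2/3}$ near the line $\Re s = 1$ translates, via the Borel--Carath\'eodory / Hadamard three-circles argument, into an upper bound on $\Re(\zeta'/\zeta)$ that carries the key $(\log t)^{2/3}(\log\log t)^{1/3}$ savings encoded in Ford's $J(t)$.

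Combining these ingredients one obtains, after choosing $\sigma = 1 + \eta/\log t$ with $\eta$ an auxiliary parameter, an inequality of the shape
\[
    \frac{b_1}{\sigma-\beta} \leq \frac{b_0}{\sigma-1} + C(P,t)\log t,
\]
where $C(P,t)$ packages both the trigonometric polynomial constants $b_k$ and the explicit Vinogradov--Korobov constants. Rearranging and optimizing $\eta$ yields a bound $1-\beta \geq 1/(R(|t|)\log|t|)$ with $R(t)$ of exactly the form stated. The key step, and the main obstacle, is obtaining fully explicit Vinogradov--Korobov bounds with constants small enough to produce Ford's numerical coefficients $0.04962$, $0.0196$, $1.15$, etc.; this requires an explicit form of Vinogradov's mean value theorem (a large combinatorial/analytic computation in its own right) and a careful numerical optimization over $\eta$, over the coefficients $b_k$ of the non-negative trigonometric polynomial, and over the radii in the Borel--Carath\'eodory step. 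I would treat this optimization step --- essentially the bulk of \cite{Ford_2002} --- as a black box and simply record the resulting constants, since re-deriving them would duplicate the cited paper.
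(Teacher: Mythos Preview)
The paper does not prove this lemma at all: it is stated as a direct citation of \cite[Thm.~3]{Ford_2002}, with a remark noting that the constant $0.618$ in $J(t)$ has been updated using the sharper bound of \cite[Thm.~1.1]{hiary2024improved}. Your sketch is a reasonable high-level outline of Ford's method (trigonometric polynomial, explicit Vinogradov--Korobov bounds on $\zeta$ near $\sigma=1$, Borel--Carath\'eodory to pass to $\zeta'/\zeta$, numerical optimisation), and since you explicitly defer the numerical optimisation and the explicit mean-value constants to \cite{Ford_2002}, your proposal effectively reduces to the same citation. The one thing missing from your account is the update to $J(t)$: the $\log(0.618)$ term reflects a newer subconvexity-type input than Ford originally used, so a complete proposal should also invoke \cite{hiary2024improved} for that constant.
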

\begin{remark}
    The expression for $J(t)$ in Lemma \ref{fordclassical} has been updated from \cite{Ford_2002} by using a more recent bound \cite[Theorem 1.1]{hiary2024improved}.
\end{remark}

\begin{lemma}{\cite[Cor.~1.2]{yang2024explicit}}\label{andrew_zf}
    For $|t|\geq 3$ there are no zeros of $\zeta(\beta+it)$ in the region $\beta\geq 1-\nu_3(t)$, where
    \begin{equation*}
       \nu_3(t)=\frac{\log\log|t|}{21.233\log |t|}.
    \end{equation*}
\end{lemma}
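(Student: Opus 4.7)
The plan is to establish this Littlewood-style zero-free region via the classical three-step argument based on the nonnegative trigonometric polynomial $3+4\cos\theta+\cos 2\theta \geq 0$. Applying this with $\theta = \gamma\log n$ in the Dirichlet series identity $-\zeta'/\zeta(\sigma+it) = \sum_n \Lambda(n) n^{-\sigma} e^{-it\log n}$ yields, for $\sigma>1$,
\begin{equation*}
    -3\,\Re\frac{\zeta'}{\zeta}(\sigma) - 4\,\Re\frac{\zeta'}{\zeta}(\sigma+i\gamma) - \Re\frac{\zeta'}{\zeta}(\sigma+2i\gamma) \geq 0.
\end{equation*}
Near the pole at $s=1$, the first term contributes at most $3/(\sigma-1) + O(1)$. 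Near a hypothetical zero $\rho=\beta+i\gamma$, an application of Hadamard factorisation combined with Borel--Carath\'eodory on a disc of suitable radius gives a lower bound of $4/(\sigma-\beta)$ for the middle term, while the third is controlled by an upper bound on $\log|\zeta(\sigma+2i\gamma)|$. Rearranging and solving the resulting inequality in $\sigma-\beta$ produces a lower bound on $1-\beta$ in terms of $t$.

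To obtain the $\log\log|t|/\log|t|$ shape of $\nu_3$ rather than the classical $1/\log|t|$ of de la Vall\'ee Poussin, the key analytic input is an explicit Vinogradov-type bound of the form $|\zeta(\sigma+it)| \ll (\log|t|)/(\log\log|t|)^a$ for $\sigma$ close to $1$, deduced from estimates on exponential sums $\sum_{n\leq N} n^{-it}$ via Vinogradov's mean value theorem or explicit van der Corput exponent pairs. Inserting this improved bound into the Borel--Carath\'eodory step is exactly what converts the de la Vall\'ee Poussin region into the Littlewood region. The explicit constant $21.233$ would then emerge from careful optimisation over the three free parameters: the distance $\sigma-1$ from the 1-line, the radius of the Borel--Carath\'eodory disc, and the parameters inside the exponential-sum estimate.

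The principal obstacle is making every step simultaneously quantitative with a sharp constant. Explicit versions of Vinogradov-type bounds near $\Re(s)=1$ carry notoriously large implicit constants, and the Hadamard/Borel--Carath\'eodory passage must balance the size of the disc against the quality of the $\zeta$-bound on its boundary --- both choices feed back into the final constant. A further wrinkle is that the lemma is claimed for the very modest height $|t|\geq 3$, so the Vinogradov-style asymptotic bound is not sharp there; in practice I would prove the analytic estimate down to some threshold $|t|\geq t_0$ large enough for the exponential-sum bounds to be effective, and supplement it with a direct computational verification (via rigorous interval arithmetic applied to $\zeta$, or by appealing to existing verified zero computations on $\Re(s)=1/2$) for $3 \leq |t| \leq t_0$.
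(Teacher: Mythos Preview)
The paper does not prove this lemma at all: it is quoted verbatim as Corollary~1.2 of Yang's paper \cite{yang2024explicit}, alongside three other cited zero-free region results in Appendix~A, and is used purely as an input to the numerical optimisation. So there is nothing in the paper to compare your argument against.

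That said, your outline is broadly the right skeleton for how such a Littlewood-type region is established, with one terminological slip worth flagging. The bound you need near $\Re(s)=1$ to upgrade the de la Vall\'ee Poussin region to the shape $\log\log|t|/\log|t|$ is not a Vinogradov bound but a Weyl--Hardy--Littlewood bound of the form $\zeta(1+it)\ll \log|t|/\log\log|t|$ (or an equivalent subconvexity bound fed through convexity). Vinogradov's mean value theorem gives the much stronger $|\zeta(\sigma+it)|\ll t^{c(1-\sigma)^{3/2}}\log|t|$, which is what produces the Vinogradov--Korobov region $\nu_4(t)\asymp 1/((\log|t|)^{2/3}(\log\log|t|)^{1/3})$ of Lemma~\ref{MTY-vk-region}, not the Littlewood region here. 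Apart from that conflation, the three-step $3+4\cos\theta+\cos 2\theta$ argument with Borel--Carath\'eodory and a computational check for small $|t|$ is indeed how one would proceed; obtaining the specific constant $21.233$ is the hard part and is precisely the content of Yang's paper.
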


\begin{lemma}{\cite[Thm.~1.2]{bellotti2024explicit}}\label{MTY-vk-region}
    For $|t|\geq 3$ there are no zeros of $\zeta(\beta+it)$ in the region $\beta\geq 1-\nu_4(t)$ where
    \begin{equation}\label{vkregion}
        \nu_4(t)=\frac{1}{53.989\log^{2/3}|t|(\log\log |t|)^{1/3}}.
    \end{equation}
\end{lemma}

To get the widest zero-free region at any given $T$ we set 
\begin{equation}\label{mainnueq}
    \nu(T)=
    \begin{cases}
        \frac{1}{2},&\text{if $|T|\leq 3\cdot 10^{12}$},\\
        \max\{\nu_1(T),\nu_2(T),\nu_3(T),\nu_4(T)\}&\text{otherwise,}
    \end{cases}
\end{equation}
where the constant $3\cdot 10^{12}$ comes from Platt and Trudgian's computational verification of the Riemann hypothesis \cite{P_T-RH_21}. Note that $\nu_2(t)\geq\nu_1(t)$ for $t\geq\exp(46.3)$, $\nu_3(t)\geq\nu_2(t)$ for $t\geq\exp(170.3)$ and $\nu_4(t)\geq\nu_3(t)$ for $t\geq\exp(482036)$.

\newpage

\section{Updating bounds for an integral over $\zeta'/\zeta$}\label{appk}
In \cite[Section 4]{CH_DJ_Perron1} the following theorem is proven.
\begin{theorem}[{\cite[Theorem 4.1]{CH_DJ_Perron1}}]\label{wolkeprop}
    Let $\alpha\in(0,1]$ and $\omega\in[0,1]$. There exists constants $K$ and $x_K$ such that if $x\geq x_K$ and $\max\{51,\log x\}<T<\frac{x^{\alpha}}{2}-1$, 
    \begin{equation}\label{wolkeeq}
        \frac{1}{2\pi i}\int_{1+\varepsilon-iT}^{1+\varepsilon+iT}\left(-\frac{\zeta'}{\zeta}(s)\right) \frac{x^s}{s}\mathrm{d}s = x-\sum_{\substack{\rho=\beta+i\gamma\\|\gamma|\leq T}}\frac{x^\rho}{\rho}+O^*\left(\frac{Kx}{T}(\log x)^{1-\omega}\right),
    \end{equation}
    where $\varepsilon=1/\log x$. 
\end{theorem}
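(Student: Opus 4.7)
The plan is to prove Theorem \ref{wolkeprop} by a classical contour-shift argument, explicitly following the method of Wolke. First, I would apply Cauchy's residue theorem to the meromorphic integrand $(-\zeta'/\zeta)(s)\cdot x^s/s$ on the rectangle with vertices $1+\varepsilon\pm iT$ and $-U\pm iT$, where $U\in(0,1)$ is a fixed constant chosen so that the left vertical side avoids the pole at $s=0$ and all trivial zeros. Inside the rectangle, the integrand has a simple pole at $s=1$ with residue $x$, simple poles at each non-trivial zero $\rho=\beta+i\gamma$ with $|\gamma|\leq T$ contributing $-x^\rho/\rho$, and a simple pole at $s=0$ with bounded residue $-\zeta'(0)/\zeta(0)=\log(2\pi)$. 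Equation \eqref{wolkeeq} therefore reduces to bounding the two horizontal integrals at $\Im(s)=\pm T$ and the left vertical integral at $\Re(s)=-U$.

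Second, the leftward vertical integral is $O(x^{-U}(\log T)^2)=O(1)$ using the functional-equation bound $|\zeta'/\zeta(-U+it)|\ll\log|t|$ valid for fixed $U>0$. The horizontal integrals are controlled via the standard estimate $|\zeta'/\zeta(\sigma+iT)|\ll(\log T)^2$ valid uniformly for $-1\leq\sigma\leq 2$ provided $T$ is at distance at least $c/\log T$ from the imaginary ordinate of every non-trivial zero. Since the gap estimate $N(T+1)-N(T)\ll\log T$ guarantees such a safe value of $T$ in every unit interval, one may perturb $T$ into a nearby $T'\in[T,T+1]$ at a cost of only $O(\log T)$ extra zero-terms, which is absorbed. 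Combined with $\int_{-U}^{1+\varepsilon}x^\sigma\,d\sigma\ll x/\log x$, this yields a horizontal contribution of $x(\log T)^2/(T\log x)\ll x\log x/T$, establishing the theorem in the $\omega=0$ case.

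The principal obstacle is the sharpening from the classical $\log x$ factor down to $(\log x)^{1-\omega}$. The pointwise estimate $|\zeta'/\zeta|\ll(\log T)^2$ is known to be an overestimate except within narrow strips where a non-trivial zero lies close to the horizontal line $\Im(s)=T$. I would split the horizontal integral at some $\sigma_0\in(0,1)$ chosen as a function of $\omega$: on the outer strip $[-U,\sigma_0]$ the pointwise bound gives the negligible contribution $O(x^{\sigma_0}(\log x)^2/T)$, while on the inner strip $[\sigma_0,1+\varepsilon]$ one replaces the pointwise bound by an averaged estimate of the form
\[
\int_T^{T+1}\left|\frac{\zeta'}{\zeta}(\sigma+it)\right|\,dt\ll(\log T)^{2-\omega},
\]
derived from the partial-fraction expansion of $\zeta'/\zeta$ combined with explicit zero-density bounds and the avoidance hypothesis on $T$. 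Integrating against $x^\sigma/T$ then produces the claimed $O(x(\log x)^{1-\omega}/T)$ contribution, with the exponent $1-\omega$ emerging as the interpolation parameter between the pointwise $(\log T)^2$ bound and a best-case $\log T$ bound.

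Finally, I would assemble all contributions, optimise the split parameter $\sigma_0$ together with any auxiliary smoothing parameter, and compute $K$ and $x_K$ explicitly from the bounds on $\zeta'/\zeta$, the zero-counting function $N(T)$, and the zero-free region of $\zeta$. The bookkeeping of explicit constants is the bulk of the technical labour, and accounts for the family of admissible pairs recorded in Table 3 of \cite{CH_DJ_Perron1}; the conceptual difficulty lies in justifying the averaged bound above, without which the argument collapses to the classical error $x\log x/T$ and the exponent $\omega$ cannot be recovered.
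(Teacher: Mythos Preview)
The paper does not itself prove this theorem: Appendix~\ref{appk} merely quotes it from \cite[Section~4]{CH_DJ_Perron1} and recomputes the constants in Table~\ref{ktable} using the newer zero-free regions of Appendix~\ref{appzf}. So there is no proof in the present paper to compare against directly; the cited argument does make Wolke's contour-shift method explicit, and your outline is broadly in that spirit.

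That said, there is a genuine gap in your treatment of the refinement from $\log x$ to $(\log x)^{1-\omega}$. First, the theorem is asserted for \emph{every} $T$ in the range $\max\{51,\log x\}<T<x^\alpha/2-1$, so perturbing to a nearby ``safe'' $T'$ is not allowed without separately controlling $\sum_{T<|\gamma|\leq T'}x^\rho/\rho$; your claim that this cost is ``absorbed'' is only justified for $\omega=0$ unless you already invoke the zero-free region to force $x^\beta\leq x^{1-\nu(T)}$. Second, and more seriously, your averaged bound $\int_T^{T+1}|\zeta'/\zeta(\sigma+it)|\,dt\ll(\log T)^{2-\omega}$ is both non-standard (no such estimate with a continuous interpolation exponent is available) and irrelevant to a horizontal integral at a single fixed height. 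The actual saving in Wolke's argument does not come from averaging in $t$: it comes from the zero-free region. Writing $\zeta'/\zeta(\sigma+iT)=\sum_{|\gamma-T|\leq 1}(s-\rho)^{-1}+O(\log T)$, the $O(\log T)$ piece already contributes $O(x\log T/(T\log x))$, while each nearby zero has $\beta\leq 1-\nu(T)$, so its contribution to the $\sigma$-integral carries a factor $x^{1-\nu(T)}$ rather than $x$. The parameter $\omega$ then enters not as an interpolation exponent in a mean-value bound but as bookkeeping: for given $x_K$ one checks that terms of size $(\log T)^2 x^{-\nu(T)}$ are dominated by $K(\log x)^{2-\omega}$, which is exactly where the zero-free region feeds into the tables (the paper points explicitly to bounds~(4.19)--(4.20) of \cite{CH_DJ_Perron1}). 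Without identifying this mechanism your argument as written cannot recover any $\omega>0$.
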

Here, corresponding values of $x_K$, $\alpha$, $\omega$ and $K$ are given in \cite[Table 3]{CH_DJ_Perron1}. A key component in computing values of $K$ is the use of explicit zero-free regions. As there have recently been improvements to explicit zero-free regions (Appendix A), the values in \cite[Table 3]{CH_DJ_Perron1} can be updated. We define $\nu(T)$ to be as in \eqref{mainnueq} and then use this updated zero-free region in the bounds (4.19) and (4.20) in \cite{CH_DJ_Perron1}. This gives us the updated table below (Table \ref{ktable}), with values for $x_K$, $\alpha$, $\omega$ and $K$ that can be used in Theorem \ref{wolkeprop}. Further computational details are given in \eqref{codeeq}.

\def\arraystretch{1.5}
\begin{table}[h]
\centering
\caption{Some corresponding values of $x_K$, $\alpha$, $\omega$ and $K$ for Theorem~\ref{wolkeprop} that updates \cite[Table 3]{CH_DJ_Perron1}. Note that $\overline{\omega}$ and $D$ are parameters one optimises over in the original calculation in \cite{CH_DJ_Perron1}.}

\begin{tabular}{|c|c|c|c|c|c|}
\hline
$\log(x_K)$ & $\alpha$ & $\omega$ & $\overline{\omega}$ & $D$ & $K$\\
\hline
$40$ & $1/2$ & $0$ & $0$ & --- & $2.033$\\
\hline
$10^3$ & $1/2$ & $0$ & $0$ & --- & $1.645$\\
\hline
$10^{10}$ & $1/2$ & $0.3$ & $0.3$ & $0.54$ & $2.274$\\
\hline
$10^{13}$ & $1/2$ & $1$ & $1.4$ & $0.50$ & $0.6367$\\
\hline
$10^3$ & $1/10$ & $0.2$ & $0.2$ & $0.35$ & $1.215$\\
\hline
$10^{10}$ & $1/10$ & $0.9$ & $0.9$ & $0.53$ & $3.425$\\
\hline
$10^3$ & $1/100$ & $0.8$ & $0.8$ & $0.46$ & $1.027$\\
\hline
$10^{10}$ & $1/100$ & $1$ & $1.5$ & $0.50$ & $0.6367$\\
\hline
$10^{3}$ & $1/85$ & $0.9$ & $0.9$ & $0.50$ & $1.218$\\
\hline
$4\cdot 10^{3}$ & $1/85$ & $0.9$ & $0.9$ & $0.50$ & $1.176$\\
\hline
\end{tabular}
\label{ktable}
\end{table}

We remark that the lower bound on $T$ in Theorem \ref{wolkeprop} is slightly stronger than the bound we desire for Theorem \ref{mainthm}. In fact, one could weaken the bounds on $T$ in Theorem \ref{wolkeprop} to get marginally better values for $K$. However, we chose to keep this lower bound unchanged as to maintain consistency between this paper and \cite{CH_DJ_Perron1}.

\newpage

\printbibliography

\end{document}